\newcounter{plainsection}
\renewcommand*{\theplainsection}{\arabic{plainsection}}
\newcounter{resultcounter}[section]
\renewcommand{\thesection}{\S\arabic{section}}
\newcounter{assumptioncounter}
\newcommand*{\asslabel}[1]{(A\refstepcounter{assumptioncounter}\theassumptioncounter\label{#1})}
\newcommand*{\assref}[1]{(A\ref{#1})}
\newcommand*{\plainsection}{%
  \refstepcounter{plainsection}%
}
\newcommand*{\thmref}[1]{\zref[theplainsection]{#1}.\zref[theresult]{#1}\,}
\newenvironment{thm}[1][]{\refstepcounter{resultcounter}\par\medskip
   \textbf{Theorem~\theplainsection.\theresultcounter.} \textsf{#1}\em}{}
\newenvironment{prop}[1][]{\refstepcounter{resultcounter}\par\medskip
   \textbf{Proposition~\theplainsection.\theresultcounter.} \textsf{#1}\em}{}
\newenvironment{cor}[1][]{\refstepcounter{resultcounter}\par\medskip
   \textbf{Corollary~\theplainsection.\theresultcounter.} \textsf{#1}\em}{}
\newenvironment{defn}[1][]{\par\medskip
   \textbf{Definition.~}\textsf{#1}\em}{}
\newenvironment{rem}[1][]{\refstepcounter{resultcounter}\par\medskip
   \textbf{Remark~\theplainsection.\theresultcounter.} \textsf{#1}\em}{}
\newenvironment{lem}[1][]{\refstepcounter{resultcounter}\par\medskip
   \textbf{Lemma~\theplainsection.\theresultcounter.}  \textsf{#1}\em}{}
\newcommand{\C}{\mathbb{C}}
\newcommand{\R}{\mathbb{R}}
\newcommand{\U}{\mathbf{U}}
\newcommand{\Z}{\mathfrak{Z}}
\newcommand{\p}{\mathbf{p}}
\newcommand{\e}{\varepsilon}
\newcommand{\abs}[1]{\left\vert{#1}\right\vert}  
\newlength{\bibitemsep}\setlength{\bibitemsep}{.7\baselineskip plus .5\baselineskip minus .05\baselineskip}
\newlength{\bibparskip}\setlength{\bibparskip}{0pt}
\let\oldthebibliography\thebibliography
\renewcommand\thebibliography[1]{%
  \oldthebibliography{#1}%
  \setlength{\parskip}{\bibitemsep}%
  \setlength{\itemsep}{\bibparskip}%
}
\begin{document}
\begin{abstract}
We revisit the existence problem of heteroclinic connections in $\R^N$ associated with Hamiltonian systems involving potentials $W:\R^N\to\R$ having several global minima. Under very mild assumptions on $W$ we present a simple variational approach to first find geodesics minimizing length of curves joining any two of the potential wells, where length is computed with respect to a degenerate metric having conformal factor $\sqrt{W}.$ Then we show that when such a minimizing geodesic avoids passing through other wells of the potential at intermediate times, it gives rise to a heteroclinic connection between the two wells. This work improves upon the approach of  \cite{sternberg1991vector} and represents a more geometric alternative to the approaches of e.g. \cite{alikakos2008connection, Bolotin, byeon2016double, rabinowitz1993homoclinic} for finding such connections. \end{abstract}

\title{On the heteroclinic connection problem for multi-well gradient systems}
\author{Andres Zuniga}
\address{Department of Mathematics\\
  Indiana University\\
  Bloomington, IN 47405.}
\email[A.~Zuniga]{ajzuniga@indiana.edu} 

\author{Peter Sternberg}
\email[P.~Sternberg]{sternber@indiana.edu}

\date{4/22/2016}

\maketitle

{\bf Keywords}: heteroclinic orbits, multi-well potentials, minimizing geodesics.

\section{Introduction}\plainsection\zlabel{intro} 

In this paper we revisit the question of existence of heteroclinic connections associated with multiple-well potentials. 
Given a potential $W:\R^N\to[0,\infty)$ whose zero set $\Z$ consists of $m$ distinct global minima $\p_1,\ldots, \p_m\in\R^N$, with $m\geq 2$, 
we pursue the question of existence of solutions $U:\R\to\R^N$ to the Hamiltonian system
 \begin{equation}\label{ConnectionEqn}
 \begin{array}{l}
  U''-\nabla_u W(U)=0 \quad \mbox{ on }\quad (-\infty,+\infty),\\[0.2cm]
  U(-\infty)=\p_j,\quad U(+\infty)=\p_k,
  \end{array}
 \end{equation} 
connecting any two of the wells $\p_j,\p_k$ with $j\neq k$.

Existence of such vector-valued heteroclinics under a variety of hypotheses on the potential and on the values of $m$ and $N$ has been obtained by a number of authors over the years, including \cite{alikakos2008connection, Bolotin, rabinowitz1993homoclinic}, based on finding critical points or minimizers of the associated Lagrangian
\[
H(U):=\int_{-\infty}^{+\infty} \frac{1}{2}\abs{U'}^2+W(U).
\]

Here we return instead to the approach of  \cite{sternberg1991vector}, originally introduced  in \cite{sternberg1988effect} for a related problem where the potential vanishes along two planar curves. In the case of a planar system $N=2$ and a double-well potential $m=2$ existence was established in \cite{sternberg1991vector} under somewhat stringent non-degeneracy assumptions on the behavior of $W$ near the wells. 
Now we place the existence question within the context of minimizing geodesics in length spaces. Under quite weak assumptions on $W$ near the wells, we provide a simple proof of existence for solutions to \eqref{ConnectionEqn} for $m=2$ and $N$ arbitrary, as well as a geometric characterization of sufficient conditions for existence that hold for any $m\geq 3$ and any $N\geq 2$. The realization of heteroclinic connections as minimizing geodesics, in a sense to be described below, yields a more geometric characterization of these curves in phase space than one typically gets from minimization of $H$.

The considerable interest in heteroclinic connections arises in part from the central role they play in analyzing models for phase transitions, in particular time-dependent and stationary solutions to the so-called vector Allen-Cahn system
\[
u_t=\Delta u -\nabla_uW(u),
\]
and its variants, see e.g., \cite{alama1997stationary, alikakos2012newproof, alikakos2013maximum, alessio2014multiplicity, bates2013entire, bethuel2011slow, bethuel2013slow, bronsard1996three, bronsard1993three, byeon2016double, rabinowitz1993homoclinic, rubinstein1989fast, schatzman2002asymmetric, sternberg1994local}.\\ 

The starting point for the approach here and in \cite{sternberg1991vector} is the observation that heteroclinic connections enjoy the property of equipartition of energy, namely 
\begin{equation}\label{equi}
\int_{\R} \frac{1}{2}|U'|^2= \int_{\R} W(U).
\end{equation}
Consequently, viewing $H(U)$ as a sum of squares, one sees that heteroclinic connections yield equality in the trivial inequality $H(U)\geq \sqrt{2}E(U)$ satisfied by any competitor, where
\begin{equation}\label{Edefn}
E(U):=\int_{\R}\sqrt{W(U)}|U'|.
\end{equation}
This naturally leads one to consider the minimization problem
\begin{equation}
\inf\{E(U): U(-\infty)=\p_j,\;U(+\infty)=\p_k\}\quad\mbox{for}\quad j,k\in\{1,2,\ldots,m\}.\label{Emin}
\end{equation}
We observe that \eqref{Emin} is purely geometric, with the value of $E$ depending only on a curve, not on its parametrization, so one regards this as a problem of minimizing the distance  between $\p_j$ and $\p_k$ in a degenerate Riemannian metric having conformal factor $\sqrt{W}$, a metric denoted here by $d(\p_j,\p_k)$. It follows immediately from the use of an \emph{equipartition parametrization}, i.e. one in which a minimizer of $E$ obeys \eqref{equi}, that a minimizer of $E$ yields a minimizer of $H$, hence a solution to \eqref{ConnectionEqn}. 

The approach in \cite{sternberg1991vector} is to carry out this program, namely finding a minimizer of \eqref{Emin}, for the case of two wells when $U$ is $\R^2$-valued ($m=2$ and $N=2$) by first solving the perturbed problem 
 \[
 \inf E_{\delta}(U)\quad\mbox{with}\quad E_{\delta}(U):=\int_{\R}\left(\sqrt{W(U)}+\delta\right)\abs{U'},
 \]
for $\delta>0$ in which the degeneracy is removed, and then passing to the limit $\delta\to 0$ in the minimizers. Obtaining $\delta$-independent bounds to establish the needed compactness for this procedure, however, is somewhat messy and seems to require rather strong assumptions on the non-degeneracy of the Hessian of $W$ at the points of $\Z$. In the present approach, we work directly in the metric space $\R^N$ endowed with metric $d$ and obtain much more general results with far weaker hypotheses.
 
 In~\zref[thesection]{sec:existence-geodesics} we pursue the question of existence of minimizers of \eqref{Emin} under very mild assumptions on $W:\R^N\to \R$, basically just continuity and non-zero behavior at infinity. We first introduce a notion of length  in the metric space $(\R^n,d)$, cf. \eqref{Ldefn}, and establish the equivalence of length of a curve and its $E$ value, cf. Theorem~\thmref{thm:EequalsL}. Then we show the existence of minimizing geodesics joining any two points in $\R^N$,  cf. Theorem~\thmref{thm:existence-length-min}.
 
In Section~\zref[thesection]{sec:study-connection-problem} we exhibit conditions under which minimizers of $E$ yield minimizers of $H$, hence solutions to \eqref{ConnectionEqn}, cf. Theorem~\thmref{thm:existence-connection}. This naturally requires further regularity assumptions on $W$ beyond continuity so as to make \eqref{ConnectionEqn} meaningful. When there are three or more potential wells, then making this logical bridge between $E$ minimizers and $H$ minimizers requires an additional assumption, namely that the minimizing geodesic joining two wells by solving \eqref{Emin} does not pass through any other wells on its way. This is precisely the obstruction to existence of heteroclinic connections that the authors of \cite{alama1997stationary} first revealed for certain planar systems and which was also examined in detail in \cite{alikakos2006explicit,alikakos2008connection} when the potential takes the form $W(z)=\abs{f(z)}^2$ where $f$ is holomorphic. Here we establish this necessary condition for non-existence for very general $W$, any $m\geq 3$, and $N$ arbitrary. 
\vskip.1in\noindent
{\bf Acknowledgments.} The authors wish to thank Jiri Dadok for pointing out the approach to finding geodesics via length spaces. P.S. wishes to acknowledge the support of the National Science Foundation through D.M.S. 1362879.

\medskip

\section{Existence of minimizing geodesics}\plainsection\zlabel{sec:existence-geodesics}

In this section we establish the existence of curves solving the problem \eqref{Emin}. Our approach leads us into the realm of length spaces.

\subsection{Geometric framework}\zlabel{subsec:geometric-framework}

For $N\geq 2$, we take $W:\R^N\to[0,\infty)$ to be any continuous function satisfying the properties below
\medskip
\begin{itemize}
\item[\asslabel{asswells}] The zero set $\Z$ of $W$ is given by $m$ distinct points $\Z=\{\p_1,\ldots \p_m\}$ so that $W(\p_1)=\ldots =W(\p_m)=0$, and $W>0$ elsewhere.\\[-0.2cm]
\item[\asslabel{assdecay}] $\liminf_{|p|\to\infty}W(p)>0$.\\[-0.2cm]
\end{itemize}

In order the make the notation somewhat simpler, we will work in this section with $F:=\sqrt{W}$ rather than $W$. 
Throughout, we will write $B_{r}(p):=\{x\in\R^N:|x-p|<r\}$ for the Euclidean ball centered at $p\in\R^N$ with radius $r>0$.
Given any continuous curve $\gamma:[0,1]\to \R^N$ we denote the set of times at which the curve runs into the zeros of $F$ by
\begin{equation}\label{TimeSet}
\mathcal{T}^{\Z}_{\gamma}:= \{t\in [0,1]: \gamma(t)\in \Z\}.
\end{equation}

A central role in our analysis will be played by the set of continuous curves defined on $[0,1]$ whose restrictions to connected sub-arcs that have no intersection with the zeros of $F$ are locally Lipschitz continuous in $\R^N$, endowed with the standard Euclidean metric $|\cdot|$. We denote this class of curves by
\[
Lip_{\Z}([0,1];\R^N):= \{\gamma\in C^0([0,1];\R^N): \,\, \gamma\in Lip_{loc}(([0,1]\setminus \mathcal{T}^{\Z}_{\gamma});\R^N) \}.
\]
We remark that in the special case where $\gamma\in Lip_{\Z}([0,1];\R^N)$ is such that $\mathcal{T}^{\Z}_{\gamma}=\emptyset$, then if fact $\gamma$ is a Lipschitz continuous curve.  Also, for any two distinct points $p,q\in \R^N$, we denote 
\[
Lip_{\Z}(p,q):= \{\gamma\in Lip_{\Z}([0,1];\R^N)\,| \,\, \gamma(0)=p,\, \gamma(1)=q\}.
\]
We consider now the functional $E:Lip_{\Z}([0,1];\R^N)\to \R$ that is used to define a notion of length of curves in $Lip_{\Z}([0,1];\R^N)$, using a metric conformal to the standard Euclidean one with $F$ as a degenerate conformal factor: 
\[
 E(\gamma):=\int_{0}^1F(\gamma(t))\abs{\gamma'(t)}dt.
\]
\begin{defn}
Let us introduce a metric $d$ on $\R^N$ induced by the functional $E$ by letting
\begin{equation}\label{Emini}
d(p,q):=\inf_{\gamma\in Lip_{\Z}(p,q)}E(\gamma),\quad \mbox{ for any }p,q\in\R^N.
\end{equation}
This metric gives rise to a natural length structure associated to it, by means of
\begin{equation}
L(\gamma):=\sup_{\{t_j\}^N_{j=1}\in P([0,1])} \sum^N_{j=1} d(\gamma(t_j),\gamma(t_{j+1})),\label{Ldefn}
\end{equation}
where $P([0,1])$ is the set of finite partitions of $[0,1]$. The value $L(\gamma)$ will be called the length of a curve $\gamma$, and we will say that a curve $\gamma$ is $E$-rectifiable when it has finite length $L(\gamma)<\infty$.
\end{defn}
\vskip.1in
Despite the degeneracy of $F$, it is easy to check that $d$ satisfies the properties of a metric on $\R^N$. 
It is worth mentioning that $d$ so defined makes $(\R^N,d)$ into a \emph{length space}, in the sense that for the metric space $(\R^N,d)$, the value of $d(p,q)$ is equal to the infimum of the length of admissible curves joining $p$ and $q$, see \cite[pp. 32]{bridson1999metric}.\\ 

Before proceeding, we make note of the easy inequality
\begin{equation}
d(p,q)\leq L(\gamma)\quad\mbox{for all}\;p,q\in\R^N\;\mbox{and all curves}\;\gamma\in Lip_{\Z}(p,q),\label{dlessL}
\end{equation}
that follows immediately from \eqref{Ldefn} by choosing the partition $P=\{0,1\}$ of $[0,1]$.
\subsection{Equivalence of $E(\gamma)$ and $L(\gamma)$ for any curve $\gamma\in Lip_\Z$}\zlabel{subsec:basic-prop-length-structure}
\medskip

Our first goal is to establish the equivalence of $E(\gamma)$ and $L(\gamma)$. To this end, we begin with a standard lower-semi-continuity property of the length functional in general length spaces, see e.g.~\cite{bridson1999metric}. For the sake of completeness, however, the proof is included.

\begin{lem}\zlabel{lem:lsc-length-functional} 
Let $\{\gamma_n\}$ be a sequence of curves from $[0,1]$ to $\R^N$, converging uniformly to an $E$-rectifiable curve $\gamma_0$ in the $d$ metric. Then 
\[
\liminf_{n\to\infty}L(\gamma_n)\geq L(\gamma_0).
\]
\end{lem}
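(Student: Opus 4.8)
The plan is to argue directly from the definition \eqref{Ldefn} of $L$ as a supremum of partition sums. Fix $\varepsilon > 0$. Since $\gamma_0$ is $E$-rectifiable, by definition of the supremum there exists a finite partition $0 = t_1 < t_2 < \cdots < t_{k+1} = 1$ of $[0,1]$ such that
\[
\sum_{j=1}^{k} d(\gamma_0(t_j), \gamma_0(t_{j+1})) \geq L(\gamma_0) - \varepsilon,
\]
with the convention that if $L(\gamma_0) = \infty$ one instead picks the partition so that the left side exceeds any prescribed large constant; I will treat the finite case, the other being identical. The point is that this particular partition is now held fixed for the rest of the argument.

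Next I would use the uniform convergence $\gamma_n \to \gamma_0$ in the $d$ metric together with the triangle inequality for $d$. For each fixed $j$,
\[
d(\gamma_0(t_j), \gamma_0(t_{j+1})) \leq d(\gamma_0(t_j), \gamma_n(t_j)) + d(\gamma_n(t_j), \gamma_n(t_{j+1})) + d(\gamma_n(t_{j+1}), \gamma_0(t_{j+1})),
\]
so summing over the finitely many $j \in \{1,\ldots,k\}$ and using $d(\gamma_n(t), \gamma_0(t)) \to 0$ uniformly in $t$ gives
\[
\sum_{j=1}^{k} d(\gamma_0(t_j), \gamma_0(t_{j+1})) \leq \liminf_{n\to\infty} \sum_{j=1}^{k} d(\gamma_n(t_j), \gamma_n(t_{j+1})) \leq \liminf_{n\to\infty} L(\gamma_n),
\]
where the last inequality holds because the partition $\{t_j\}$ is one particular admissible partition in the supremum defining $L(\gamma_n)$. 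Combining the two displays yields $L(\gamma_0) - \varepsilon \leq \liminf_{n\to\infty} L(\gamma_n)$, and letting $\varepsilon \to 0$ finishes the proof.

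The argument is essentially routine — the only point requiring any care is the logical ordering: one must choose the near-optimal partition for $\gamma_0$ \emph{first}, fix it, and only then pass to the limit in $n$, so that the number of terms $k$ does not depend on $n$ and the uniform convergence can be applied termwise to a finite sum. A secondary technical point worth a remark is that the evaluations $\gamma_n(t_j)$, $\gamma_0(t_j)$ make sense as points of $\R^N$ for \emph{every} $t_j \in [0,1]$ (including partition points lying in $\mathcal{T}^\Z_\gamma$), since curves in $Lip_\Z$ are globally continuous, so the partition sums are well defined regardless of whether the chosen times hit the zero set of $F$. No regularity of the curves beyond continuity is used; the estimate rests purely on the metric-space structure of $(\R^N, d)$ and on $L$ being a supremum over a class of partitions that is the same for every curve. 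If $L(\gamma_0) = \infty$ one instead shows $\liminf_n L(\gamma_n)$ exceeds every finite constant by the same mechanism, so that hypothesis of $E$-rectifiability is included only for definiteness and could in fact be dropped.
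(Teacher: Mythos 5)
Your proof is correct and follows essentially the same route as the paper's: fix a finite partition, apply the triangle inequality for $d$ termwise, use uniform convergence to control the finitely many error terms, and pass to the supremum over partitions. The only cosmetic difference is that you select a near-optimal partition for $\gamma_0$ up front while the paper works with an arbitrary partition and takes the supremum at the end; the content is identical.
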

\begin{proof}[Proof of Lemma~\thmref{lem:lsc-length-functional}]
Consider an $E$-rectifiable curve $\gamma_0:[0,1]\to \R^N$ and suppose $\gamma_n\overset{d}{\rightrightarrows} \gamma_0$. 
Let $\{t_j\}^J_{j=0}$ be any partition in $P([0,1])$. The uniform convergence yields for any $\e>0$, $\exists n_0(\e)$ so that for $n\geq n_0(\e)$
\[
\sup_{t\in [0,1]} d(\gamma_n(t),\gamma_0(t))<\frac{\e}{2J}.
\]
It follows that for $n\geq n_0(\e)$, and all $0\leq j\leq J$:

\[
\begin{array}{rl}
d(\gamma_0(t_j),\gamma_0(t_{j+1}))\leq & 
d(\gamma_0(t_j),\gamma_n(t_j))+
d(\gamma_n(t_j),\gamma_n(t_{j+1}))+
d(\gamma_n(t_{j+1}),\gamma_0(t_{j+1}))\\[0.2cm]
\leq & {\displaystyle \frac{\e}{2J}+d(\gamma_n(t_j),\gamma_n(t_{j+1}))+\frac{\e}{2J}}.
\end{array}
\]
Adding these inequalities over $j\in\{0,\ldots, J\}$ we deduce 
\[
\sum^J_{j=0} d(\gamma_0(t_j),\gamma_0(t_{j+1}))\leq L(\gamma_n)+\e,
\]
from which it follows $\sum^J_{j=0} d(\gamma_0(t_j),\gamma_0(t_{j+1}))\leq 
\liminf\limits_{n\to\infty} L(\gamma_n)+\e$. Taking the supremum over all partitions in $P([0,1])$ we conclude that $L(\gamma_0)\leq \liminf\limits_{n\to\infty} L(\gamma_n)+\e$, with $\e>0$ arbitrary.
\end{proof}

\medskip


Next, we establish a lower-semi-continuity property of the functional $E$.

\begin{lem}\zlabel{lem:lsc-E-functional} Consider $\{\gamma_j\},\,\gamma_0\in Lip_{\Z}([0,1];\R^N)$.  
Then the functional $E:Lip_{\Z}([0,1];\R^N)\to \R$ is lower semi-continuous with respect to  uniform convergence in the Euclidean metric:
\[
\liminf\limits_{j\to\infty}E(\gamma_j) \geq E(\gamma_0)
\quad \mbox{ whenever } \quad \sup_{t\in[0,1]}\abs{\gamma_j- \gamma_0}\rightarrow 0\,\,\mbox{  as }\,\, j\to\infty.
\]
\end{lem}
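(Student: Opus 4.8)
The plan is to argue directly from the definition of $E$, not via the identity $E=L$ (which is established only afterwards, presumably using this very lemma). First I would pass to a subsequence, still denoted $\{\gamma_j\}$, along which $E(\gamma_j)\to\ell:=\liminf_j E(\gamma_j)$, assuming $\ell<\infty$ since otherwise there is nothing to prove. Because $\gamma_0([0,1])$ is compact, uniform convergence places all $\gamma_j$ for $j$ large, together with $\gamma_0$, inside a fixed closed ball $K$ on which $F$ is bounded and uniformly continuous; hence $F\circ\gamma_j\to F\circ\gamma_0$ uniformly on $[0,1]$. It then suffices to prove $\ell\ge A$ for every finite $A<E(\gamma_0)$.

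The heart of the argument is a local lower-semicontinuity statement on time-intervals where $\gamma_0$ stays away from $\Z$. Suppose $[a,b]\subseteq[0,1]$ satisfies $\dist(\gamma_0([a,b]),\Z)\ge\rho>0$. Then for $j$ large $\dist(\gamma_j([a,b]),\Z)\ge\rho/2$, so $[a,b]\cap\mathcal{T}^\Z_{\gamma_j}=\emptyset$ and $\gamma_j|_{[a,b]}$ is genuinely Lipschitz (as is $\gamma_0|_{[a,b]}$). For any partition $a=s_0<\dots<s_K=b$ I would use the Riemann-type lower bound
\[
\int_a^b F(\gamma_j)\,|\gamma_j'|\ \ge\ \sum_{i=1}^K\Big(\min_{[s_{i-1},s_i]}F\circ\gamma_j\Big)\,\big|\gamma_j(s_i)-\gamma_j(s_{i-1})\big|,
\]
valid since $\int_{s_{i-1}}^{s_i}|\gamma_j'|\ge|\gamma_j(s_i)-\gamma_j(s_{i-1})|$. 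Letting $j\to\infty$, uniform convergence of $\gamma_j$ and of $F\circ\gamma_j$ sends the right side to $\sum_i(\min_{[s_{i-1},s_i]}F\circ\gamma_0)\,|\gamma_0(s_i)-\gamma_0(s_{i-1})|$. Taking the supremum over partitions of $[a,b]$ and using the elementary identity
\[
\sup_{\{s_i\}}\sum_i\Big(\min_{[s_{i-1},s_i]}F\circ\gamma_0\Big)\,\big|\gamma_0(s_i)-\gamma_0(s_{i-1})\big|\ =\ \int_a^b F(\gamma_0)\,|\gamma_0'|,
\]
which holds for the Lipschitz curve $\gamma_0|_{[a,b]}$ by refining partitions (the oscillation of $F\circ\gamma_0$ going to $0$ by uniform continuity, and $\sum_i|\gamma_0(s_i)-\gamma_0(s_{i-1})|\to\int_a^b|\gamma_0'|$ by absolute continuity) — essentially the computation behind Lemma~\thmref{lem:lsc-length-functional} — I obtain $\liminf_j\int_a^b F(\gamma_j)|\gamma_j'|\ge\int_a^b F(\gamma_0)|\gamma_0'|$.

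To conclude I would exhaust the part of $\gamma_0$ lying away from $\Z$. With $G_\eta:=\{t\in[0,1]:\dist(\gamma_0(t),\Z)>\eta\}$, these open sets increase to $[0,1]\setminus\mathcal{T}^\Z_{\gamma_0}$ as $\eta\downarrow0$, and since $F(\gamma_0)$ vanishes on $\mathcal{T}^\Z_{\gamma_0}$, monotone convergence gives $\int_{G_\eta}F(\gamma_0)|\gamma_0'|\nearrow E(\gamma_0)$; fix $\eta$ with this integral $>A$. Writing $G_\eta$ as a countable disjoint union of open intervals — on the closure of each of which $\gamma_0$ is Lipschitz — I select finitely many and shrink each slightly, obtaining disjoint compact intervals $[a_1,b_1],\dots,[a_L,b_L]\subset G_\eta$ with $\dist(\gamma_0([a_l,b_l]),\Z)>0$ for every $l$ and $\sum_l\int_{a_l}^{b_l}F(\gamma_0)|\gamma_0'|>A$. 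For $j$ large, disjointness and nonnegativity of the integrand give $E(\gamma_j)\ge\sum_l\int_{a_l}^{b_l}F(\gamma_j)|\gamma_j'|$, so applying the local estimate on each piece yields $\ell=\lim_j E(\gamma_j)\ge\sum_l\int_{a_l}^{b_l}F(\gamma_0)|\gamma_0'|>A$. Since $A<E(\gamma_0)$ was arbitrary, $\liminf_j E(\gamma_j)\ge E(\gamma_0)$.

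I expect the main obstacle to be the degeneracy of $F$ at $\Z$: the admissible curves are only locally Lipschitz off the closed set $\mathcal{T}^\Z_\gamma$, which may even have positive measure, so there is no uniform Lipschitz bound and no Arzelà–Ascoli compactness, and a global Riemann-sum argument is unavailable. The device that resolves this is that the part of $\gamma_0$ near $\Z$ contributes nothing to $E(\gamma_0)$ and can simply be discarded, leaving a portion that is covered by finitely many compact time-intervals on which $F\circ\gamma_0$ is bounded below — reducing the matter, piece by piece, to the standard non-degenerate lower-semicontinuity of weighted length.
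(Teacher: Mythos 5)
Your proof is correct, and its overall scaffolding coincides with the paper's: both excise a time-neighbourhood of $\Z$ (your $G_\eta$ plays the role of the paper's $T_\e$), prove lower semicontinuity on the non-degenerate remainder, and recover $E(\gamma_0)$ by monotone convergence using that $F(\gamma_0)$ vanishes on the excised set. The mechanism for the local lower-semicontinuity step, however, is genuinely different. The paper writes $\int_{T_\e}F(\gamma_j)|\gamma_j'|$ as $\int_{T_\e}(F(\gamma_j)-F(\gamma_0))|\gamma_j'|+\int_{T_\e}F(\gamma_0)|\gamma_j'|$, controls the first term by an equi-bound on the Euclidean arc-length $\|\gamma_j'\|_{L^1(T_\e)}$ (available because $F\ge c_\e>0$ there and $E(\gamma_j)$ is bounded along the sequence), and treats the frozen-weight second term by the duality characterization of the weighted total variation against vector fields $g\in C^1_c$ with $|g|\le F(\gamma_0)$. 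You instead minorize $\int_a^bF(\gamma_j)|\gamma_j'|$ by the partition sums $\sum_i\bigl(\min_{[s_{i-1},s_i]}F\circ\gamma_j\bigr)\,|\gamma_j(s_i)-\gamma_j(s_{i-1})|$, pass to the limit in $j$ for each fixed partition, and identify the supremum of the limiting sums with $\int_a^bF(\gamma_0)|\gamma_0'|$ by mesh refinement. This is more elementary (no BV duality) and dispenses entirely with the uniform arc-length bound, since the inequality $\int_{s_{i-1}}^{s_i}|\gamma_j'|\ge|\gamma_j(s_i)-\gamma_j(s_{i-1})|$ discards the excess length for free; it is also closer in spirit to the length-structure identity $L=E$ established immediately afterwards. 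What it costs is the bookkeeping of decomposing $G_\eta$ into finitely many compact intervals on which the $\gamma_j$ are genuinely Lipschitz for large $j$, and the verification that the weighted partition sums of a Lipschitz curve converge to $\int_a^b F(\gamma_0)|\gamma_0'|$ as the mesh shrinks — your sketch of that point (oscillation of $F\circ\gamma_0$ controlled by uniform continuity, together with $\sum_i|\gamma_0(s_i)-\gamma_0(s_{i-1})|\to\int_a^b|\gamma_0'|$) is the standard and correct way to do it. Both arguments are sound; yours trades the measure-theoretic duality for a length-space computation.
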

\begin{proof}[Proof of Lemma~\thmref{lem:lsc-E-functional}]
With no loss of generality we assume $\liminf_{j\to\infty}E(\gamma_j)<\infty$. Fixing $\e>0$, let us consider the punctured plane $\Omega_{2\e}=\R^N\setminus (\bigcup^m_{l=1}\{x:|x-\p_l|<2\e\})$, and in addition we introduce the set
$T_{\e}=\{t\in [0,1]: \gamma_0(t)\in \Omega_{2\e}\}$ with the possibility that $[0,1]\setminus T_\e$ could be empty if $\gamma_0$ avoids $\Z$.
The uniform convergence $\gamma_j\rightrightarrows \gamma_0$ yields the existence of a value $j_0(\e)$ such that 
$\bigcup_{j\geq j_0(\e)}\{\gamma_j(t): t\in T_{\e}\}\subset\Omega_{\e}$. 
Next, we decompose
\begin{equation}\label{LscEeqn1}
\int_{T_{\e}}F(\gamma_j)|\gamma'_j|\,dt= 
\int_{T_{\e}}(F(\gamma_j)-F(\gamma_0))|\gamma'_j|\,dt
+\int_{T_{\e}}F(\gamma_0)|\gamma'_j|\,dt.
\end{equation}
The key observation is that $\{\gamma_j\}$ restricted to $T_{\e}$ has bounded Euclidean arc-length: 
\[ 
\liminf\limits_{j\to\infty}\int_{T_{\e}}|\gamma'_j|\, dt\leq 
\frac{1}{c_{\e}}\liminf \limits_{j\to\infty}\int_{T_{\e}}F(\gamma_j)|\gamma'_j|\,dt
\leq \frac{1}{c_{\e}}\liminf\limits_{j\to\infty}E(\gamma_j)\equiv C_{\e}<\infty.
\]
for $c_{\e}=\min_{p\in{\Omega}_{\e}} F(p)>0$. In particular, upon the extraction of a subsequence, we may assume $\|\gamma'_j\|_{L^1(T_{\e})}\leq C_{\e}$ for all $j\geq j_0(\e)$.  By virtue of the uniform continuity of $F$ on the compact set 
$\{p: \inf_{t\in [0,1]} |p-\gamma_0(t)|\leq \delta\}$ for some $\delta=\delta(\e)$, together with the uniform convergence $\gamma_j\rightrightarrows \gamma_0$ in the Euclidean metric, we get the bound 
 $\max_{t\in [0,1]} |F(\gamma_j)-F(\gamma_0)|<\e/C_{\e}$ for all $j\geq j_1(\e)$.
Whence, choosing $j\geq \max\{j_0(\e),j_1(\e)\}$ it follows that
\begin{equation}\label{LscEeqn2}
\limsup_{j\to\infty}\abs{\int_{T_{\e}}(F(\gamma_j)-F(\gamma_0))|\gamma'_j|\,dt}\leq \e.
\end{equation}
For the second term in \eqref{LscEeqn1}, note that $t\mapsto F(\gamma_0(t))$ is a continuous positive function defined on $T_{\e}$, thus we can use the following characterization of bounded variation for an $L^1$-function:
\[
\int_{T_{\e}}F(\gamma_0)|\gamma'_j|\,dt
=\sup_{\substack{g\in C^1_c(T_{\e};\R^N)\\ |g(t)|\leq F(\gamma_0(t)) \text{ for }t\in T_{\e} }}
\int_{T_{\e}}\gamma_j(t)\cdot g'(t)\,dt.
\]
Fix now such a vector field $g$. The uniform convergence $|\gamma_j-\gamma_0|\rightrightarrows 0$ implies 

\[
\liminf_{j\to\infty}\int_{T_{\e}}F(\gamma_0)|\gamma'_j|\,dt\geq 
\liminf_{j\to\infty}\int_{T_{\e}}\gamma_j(t)\cdot g'(t)\,dt
=\int_{T_{\e}}\gamma_0(t)\cdot g'(t)\,dt,
\]
so taking the supremum over $g\in C^1_c(T_{\e};\R^N)$ with $|g(t)|\leq F(\gamma_0(t))$ on $T_{\e}$ we arrive at
\begin{equation}\label{LscEeqn3}
\liminf_{j\to\infty}\int_{T_{\e}}F(\gamma_0)|\gamma'_j|\,dt
\geq\int_{T_{\e}}F(\gamma_0)|\gamma'_0|\,dt.
\end{equation} 
Applying the estimates \eqref{LscEeqn2} and \eqref{LscEeqn3} to the identity \eqref{LscEeqn1}, one derives 
\[ 
\liminf_{j\to\infty}\int_{T_{\e}}F(\gamma_j)|\gamma'_j|\,dt
\geq \int_{T_{\e}}F(\gamma_0)|\gamma'_0|\,dt-\e.
\]
Now, the continuity of $\gamma_0$ ensures the convergence of the characteristic functions $\chi_{T_{\e}}\to \chi_{[0,1]}\equiv 1$ for a.e.~$t\in[0,1]$, as $\e \to 0^+$.  Then the monotone convergence theorem applied to the above inequality proves the desired conclusion
\[
\begin{array}{rl}
 \liminf\limits_{j \to\infty}E(\gamma_j) \geq & {\displaystyle \limsup_{\e \to 0^+} \left(\int_{T_{\e}}F(\gamma_0)|\gamma'_0|\,dt-\e\right)}\\[0.4cm]
=& {\displaystyle \int_{[0,1]}F(\gamma_0)|\gamma'_0|\,dt=E(\gamma_0)}.\\[-0.3cm]
\end{array}
\]
\end{proof}

The main goal of this section is to establish the existence of $E$-minimizing curves joining two given points in $\R^N$. Here we present a preliminary result on the existence of minimizers joining two nearby points, both far away from the zero set $\Z$ of $W$. There the metric $d$ is locally equivalent to the standard Euclidean metric. Consequently, the existence of $E$-minimizing curves joining nearby points will follow easily from an application of the direct method in the calculus of variations, where compactness is recovered from the non-degeneracy of the metric $d$ away from the wells.

\begin{lem}\zlabel{lem:local-existence-Emin}
For every $\e>0$ such that $\Z\subset B_{1/\e}(0)$ there exists a number $r_{\e}>0$ such that for all $p,q\in B_{1/\e}(0)\setminus \bigcup^m_{j=1}B_{2\e}(\p_j)$ satisfying $|p-q|<r_{\e}$, there exists an $E$-minimizing curve joining $p$ to $q$ that avoids an $\e$-neighborhood of $\Z$. \\[-.25cm]
\end{lem}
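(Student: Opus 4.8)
The plan is to apply the direct method of the calculus of variations in the region where the degeneracy of $F$ plays no role. First I would fix $\e>0$ with $\Z\subset B_{1/\e}(0)$ and set $c_\e:=\min\{F(p):p\in B_{1/\e}(0)\setminus\bigcup_j B_\e(\p_j)\}>0$, which is positive by \assref{asswells} and continuity of $F$ on the compact annular region. The number $r_\e$ will be chosen small enough that the straight segment $[p,q]$ stays inside $B_{1/\e}(0)\setminus\bigcup_j B_\e(\p_j)$ whenever $p,q\in B_{1/\e}(0)\setminus\bigcup_j B_{2\e}(\p_j)$ and $|p-q|<r_\e$; taking $r_\e\leq\e$ suffices, since then any point of the segment lies within $\e$ of $p$, hence within $B_{1/\e}(0)$ and at distance at least $\e$ from each $\p_j$. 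Such a segment has finite $E$-value (at most $C_\e r_\e$ where $C_\e:=\max F$ over the closed ball), so $d(p,q)<\infty$ and the infimum in \eqref{Emini} is taken over a nonempty set of curves with finite $E$.

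Next I would take a minimizing sequence $\{\gamma_n\}\subset Lip_\Z(p,q)$ with $E(\gamma_n)\to d(p,q)$, and I would first argue that the minimizing sequence may be confined to the good region $\Omega:=B_{1/\e}(0)\setminus\bigcup_j B_\e(\p_j)$. Indeed, any portion of a curve that wanders outside $B_{1/\e}(0)$ can be replaced by a shorter (in $E$) path along $\partial B_{1/\e}(0)$ using \assref{assdecay} — or more simply, since the competitor segment already has $E$-value $\leq C_\e r_\e$ and $F$ is bounded below by a positive constant at infinity and away from the wells, any curve leaving $\Omega$ and returning can be truncated/projected to reduce $E$; making $r_\e$ small forces $d(p,q)$ to be so small that no near-minimizer can reach $\partial B_\e(\p_j)$ or $\partial B_{1/\e}(0)$. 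This is the step I expect to require the most care: one must quantify that a curve which touches $\bigcup_j \partial B_\e(\p_j)$ or exits $B_{1/\e}(0)$ has $E$-value bounded below by a constant depending only on $\e$ (not on $r_\e$), namely $c_\e\cdot\e$, and then choose $r_\e$ so that $C_\e r_\e < c_\e\e$. Once that is done, the tail of the minimizing sequence lies entirely in $\Omega$, where $F\geq c_\e>0$, so $\gamma_n$ has uniformly bounded Euclidean length $\int_0^1|\gamma_n'|\leq E(\gamma_n)/c_\e$.

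With uniform Euclidean length bounds in hand, I would reparametrize each $\gamma_n$ by constant multiple of Euclidean arclength so that $|\gamma_n'|\equiv \ell_n\leq (d(p,q)+1)/c_\e$ a.e.; this makes $\{\gamma_n\}$ uniformly Lipschitz, hence equicontinuous and uniformly bounded (all curves lie in the compact set $\overline{B_{1/\e}(0)}$), so by Arzel\`a--Ascoli a subsequence converges uniformly in the Euclidean metric to some $\gamma_0\in C^0([0,1];\R^N)$ with $\gamma_0(0)=p$, $\gamma_0(1)=q$. Since $\gamma_0$ lies in $\Omega$, which is disjoint from $\Z$, we have $\mathcal{T}^\Z_{\gamma_0}=\emptyset$ and $\gamma_0$ is Lipschitz, so $\gamma_0\in Lip_\Z(p,q)$ and in fact avoids an $\e$-neighborhood of $\Z$. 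Finally, lower semicontinuity of $E$ under uniform Euclidean convergence — which is exactly Lemma~\thmref{lem:lsc-E-functional} — gives
\[
E(\gamma_0)\leq\liminf_{n\to\infty}E(\gamma_n)=d(p,q)\leq E(\gamma_0),
\]
so $E(\gamma_0)=d(p,q)$ and $\gamma_0$ is the desired $E$-minimizer. (The uniform Euclidean convergence on $\overline{B_{1/\e}(0)}$ together with the equivalence of $d$ and $|\cdot|$ on $\Omega$ also gives $d$-uniform convergence, should that formulation be preferred.) The only genuinely nontrivial point is the a priori confinement of the minimizing sequence to $\Omega$; everything else is Arzel\`a--Ascoli plus the already-proved lower semicontinuity lemma.
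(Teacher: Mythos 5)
Your proposal is correct and follows essentially the same route as the paper: choose $r_\e$ so that the segment's $E$-value $C_\e r_\e$ falls strictly below the cost (roughly $c_\e\,\e$) of crossing the annulus $\{\e\leq|x-\p_j|\leq 2\e\}$, conclude that near-minimizers avoid the $\e$-neighborhoods of $\Z$, extract a uniform Euclidean arclength bound from the resulting positive lower bound on $F$ via \assref{assdecay}, reparametrize by constant speed, and combine Arzel\`a--Ascoli with Lemma~\thmref{lem:lsc-E-functional}. The one sub-claim you should drop is that a curve exiting $B_{1/\e}(0)$ pays at least $c_\e\e$ --- this fails when $p$ or $q$ lies near $\partial B_{1/\e}(0)$ --- but confinement to $B_{1/\e}(0)$ is unnecessary (the paper never imposes it), since the arclength bound together with the fixed endpoint $p$ already yields uniform boundedness of the reparametrized sequence.
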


\begin{proof}[Proof of Lemma~\thmref{lem:local-existence-Emin}]
Given $\e>0$, we define the three positive numbers
\[
M_\e:=\max_{\{p:\,\abs{p}\leq 1/\e\}}F(p),\quad m_\e:=\min_{\bigcup_j \{p:\;\e\leq \abs{p-\p_j}\leq 2\e\}}F(p)\quad\mbox{and}\quad r_\e:=\frac{
\e \,m_\e}{M_\e}.
\]

Now we take distinct points $p,q\in B_{1/\e}(0)\setminus \bigcup^m_{j=1}B_{2\e}(\p_j)$ satisfying $|p-q|\leq r_\e$ and let $\{\gamma_k\}\subset Lip_{\Z}(p,q)$ denote a minimizing sequence in \eqref{Emini} so that $E(\gamma_k)\to d(p,q).$ Denoting by $\alpha_{\text{aff}}$ the line segment joining $p$  to $q$ we may assume $E(\gamma_k)\leq E(\alpha_{\text{aff}})$ so that we obtain the upper bound
\begin{equation}
E(\gamma_k)\leq E(\alpha_{\text{aff}})=\int_{0}^1F(\alpha_{\text{aff}}(s))|p-q|ds\leq M_\e\,r_\e.\label{lesslinear}
\end{equation}
In light of \eqref{lesslinear}, we now claim that for each $k$, $\gamma_k$ cannot pass within $\e$-Euclidean distance of the zero set of $F$. To see this, note
that if it did pass within $\e$ of $\p_j$ for some $j$, then we would have the lower bound
\[
E(\gamma_k)\geq \int\limits_{\{t:\;\e\leq \abs{\gamma_k(t)-{\bf{p}}_j}\leq 2\e\}}F(\gamma_k(t))|\gamma_k'(t)|\,dt\\\geq 2\e\,m_\e,\]
which is impossible given the definition of $r_\e$, see Figure~\ref{fig:1} below.
\begin{figure}[!ht]
  \centering
   \includegraphics[width=0.6\textwidth]{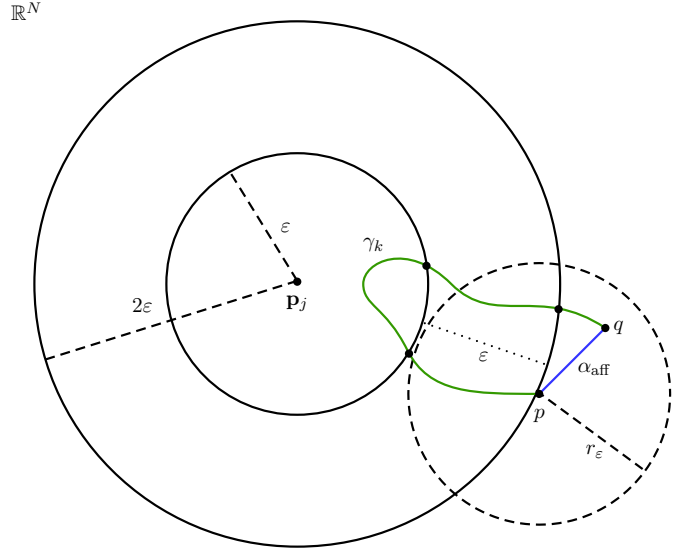}
\caption{Example of $\gamma_k\in Lip_{\Z}(p,q)$ with $\gamma_k([0,1])\cap B_{\varepsilon}(\p_j)\neq \emptyset$.}
\label{fig:1}
\end{figure}

Once we know each $\gamma_k$ avoids an $\e$ neighborhood of $\Z$, we can invoke the assumption \assref{assdecay} to easily obtain the compactness we need, in that for all $k$ we have
\[
\int^1_{0}\abs{\gamma'_k}dt\leq \left(\min\limits_{\bigcup_j \{p:\,\abs{p-\p_j}\geq \e\}}F(p)\right)^{-1}\int^1_{0}F(\gamma_k)\abs{\gamma_k'}dt
\leq \left(\min\limits_{\bigcup_j \{p:\,\abs{p-\p_j}\geq \e\}}F(p)\right)^{-1}(d(p,q)+1).
\]

We can then reparametrize $\gamma_k$ by constant speed $C_k:=\int^1_{0}\abs{\gamma_k'}dt$ with $C_k$ bounded independent of $k$. Hence, $\gamma_k:[0,1]\to \R^N$ are equi-Lipschitz. Applying the Arzela-Ascoli theorem, we have that upon extraction of a subsequence, there exists a Lipschitz curve $\gamma_0$ such that $\gamma_{k_\ell}\rightrightarrows \gamma_0$ with respect to the Euclidean metric. The lower-semi-continuity result Lemma~\thmref{lem:lsc-E-functional} then shows 
that
\[ 
d(p,q)=\liminf_{\ell\to\infty}E(\gamma_{k_\ell})\geq E(\gamma_0),
\]
and so $\gamma_0$ minimizes $E$.
\end{proof}

The key result of this subsection is the assertion below that $E(\gamma)$ and $L(\gamma)$ coincide:

\begin{thm}\zlabel{thm:EequalsL}
For any curve $\gamma\in Lip_{\Z}([0,1];\R^N)$ one has the equivalence 
\[
L(\gamma)=E(\gamma).
\]
\end{thm}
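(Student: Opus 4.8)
The plan is to prove the two inequalities $L(\gamma)\le E(\gamma)$ and $E(\gamma)\le L(\gamma)$ separately. The first is routine and uses only the definitions: given any partition $0=t_0<\cdots<t_J=1$, the restriction of $\gamma$ to $[t_j,t_{j+1}]$, reparametrized affinely onto $[0,1]$, belongs to $Lip_{\Z}(\gamma(t_j),\gamma(t_{j+1}))$, so since $E$ is invariant under reparametrization, $d(\gamma(t_j),\gamma(t_{j+1}))\le E\big(\gamma|_{[t_j,t_{j+1}]}\big)=\int_{t_j}^{t_{j+1}}F(\gamma)|\gamma'|\,dt$. Summing over $j$ and using additivity of the integral gives $\sum_j d(\gamma(t_j),\gamma(t_{j+1}))\le E(\gamma)$, and taking the supremum over partitions yields $L(\gamma)\le E(\gamma)$; a degenerate term $\gamma(t_j)=\gamma(t_{j+1})$ is harmless since then $d=0$. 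The reverse inequality is the substantive one, and the conceptual point is that $L$, being a supremum over \emph{all} partitions, recovers the full Euclidean arc length of $\gamma$ (weighted by $F$) once one passes to sufficiently fine partitions — exactly as the classical length of a rectifiable curve is the supremum of its chord sums. A term-by-term comparison of $d(\gamma(t_j),\gamma(t_{j+1}))$ with $\int_{t_j}^{t_{j+1}}F(\gamma)|\gamma'|$ is hopeless, because $d$ only controls chord distance, not arc length.

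To prove $E(\gamma)\le L(\gamma)$ it suffices to show $L(\gamma)\ge A$ for every real $A<E(\gamma)$ (this also covers $E(\gamma)=\infty$). Since $F\circ\gamma$ vanishes on $\mathcal{T}^{\Z}_{\gamma}$, one has $\int_{[0,1]\setminus\mathcal{T}^{\Z}_{\gamma}}F(\gamma)|\gamma'|\,dt=E(\gamma)>A$; as $[0,1]\setminus\mathcal{T}^{\Z}_{\gamma}$ is a countable disjoint union of open intervals, on whose compact subintervals $\gamma$ is Lipschitz, I would pick finitely many pairwise disjoint closed intervals $I_1,\dots,I_r\subset[0,1]\setminus\mathcal{T}^{\Z}_{\gamma}$ with $\sum_n\int_{I_n}F(\gamma)|\gamma'|\,dt>A$. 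Then $S:=\bigcup_n\gamma(I_n)$ is compact and disjoint from $\Z$, so $c:=\min_S F>0$; fix $\delta\in(0,c)$, and using uniform continuity of $F$ on a compact neighborhood of $S$ lying outside $\Z$, choose $\rho>0$ so that $x\in S$ and $|x-y|\le\rho$ force $y\notin\Z$ and $F(y)\ge F(x)-\delta$. The key local estimate I would establish — by the same mechanism as in Lemma~\thmref{lem:local-existence-Emin} — is that for $p\in S$ and $|p-q|<\rho$, every competitor $\sigma\in Lip_{\Z}(p,q)$ satisfies $E(\sigma)\ge(F(p)-\delta)|p-q|$, hence $d(p,q)\ge(F(p)-\delta)|p-q|$: if $\sigma$ stays in $B_{\rho}(p)$ this follows from $F\circ\sigma\ge F(p)-\delta$ there together with $(\text{Euclidean length of }\sigma)\ge|p-q|$, and if $\sigma$ ever leaves $B_{\rho}(p)$ then the sub-arc of $\sigma$ up to its first exit has Euclidean length $\ge\rho>|p-q|$ and still lives where $F\ge F(p)-\delta$, so $E(\sigma)\ge(F(p)-\delta)\rho$.

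With this estimate in hand, I would run a two-scale partition argument on each $I_n$. First subdivide $I_n$ into pieces $[u_l,u_{l+1}]$ on which $\operatorname{osc}(F\circ\gamma)<\delta$, so that $\int_{u_l}^{u_{l+1}}F(\gamma)|\gamma'|\,dt\le(\mu_l+\delta)\Lambda_l$ with $\mu_l:=\inf_{[u_l,u_{l+1}]}F\circ\gamma$ and $\Lambda_l:=\int_{u_l}^{u_{l+1}}|\gamma'|\,dt$. Then refine each $[u_l,u_{l+1}]$ into points $u_l=s_0<\cdots<s_m=u_{l+1}$ with mesh small enough that the local estimate applies on each sub-subinterval and, using $F\circ\gamma\ge\mu_l$, $\sum_k d(\gamma(s_k),\gamma(s_{k+1}))\ge(\mu_l-\delta)\sum_k|\gamma(s_{k+1})-\gamma(s_k)|$, and simultaneously small enough that the chord sum $\sum_k|\gamma(s_{k+1})-\gamma(s_k)|$ exceeds $\Lambda_l-\delta'$ (possible because $\gamma|_{I_n}$ is Lipschitz, so its chord sums converge to $\Lambda_l$ as the mesh shrinks). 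Since $(\mu_l-\delta)(\Lambda_l-\delta')\ge\int_{u_l}^{u_{l+1}}F(\gamma)|\gamma'|\,dt-2\delta\Lambda_l-M\delta'$ with $M:=\sup_S F<\infty$, assembling these bounds over all $l$ and $n$ into a single partition of $[0,1]$ and discarding the nonnegative contributions of the remaining subintervals gives
\[
L(\gamma)\ \ge\ \sum_n\int_{I_n}F(\gamma)|\gamma'|\,dt\ -\ 2\delta\sum_n\int_{I_n}|\gamma'|\,dt\ -\ C'\delta'\ >\ A-2\delta\,C-C'\delta',
\]
where $C:=\sum_n\int_{I_n}|\gamma'|\,dt<\infty$ is independent of $\delta$ and $\delta'$ and $C'<\infty$ is independent of $\delta'$. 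Letting $\delta'\to0$, then $\delta\to0$, then $A\uparrow E(\gamma)$ completes the proof. The step I expect to be the main obstacle is precisely this two-scale bookkeeping: one must keep the mesh requirement coming from the local estimate uniform over the compact set $S$ while at the same time controlling the oscillation of $F\circ\gamma$ and the chord-sum approximation of $\Lambda_l$, and one must be careful near $\mathcal{T}^{\Z}_{\gamma}$, where the integrand vanishes but $|\gamma'|$ may be unbounded — this last point is exactly what the reduction to $A<E(\gamma)$ and the choice of the $I_n$ inside $[0,1]\setminus\mathcal{T}^{\Z}_{\gamma}$ are designed to circumvent.
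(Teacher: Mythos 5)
Your proof is correct, but it takes a genuinely different route from the paper's for the substantive inequality $E(\gamma)\leq L(\gamma)$. The paper first reduces, by additivity, to a single arc whose endpoints lie in $\Z$ and which avoids $\Z$ at intermediate times; it then builds an explicit approximating sequence $\gamma_n$ by interpolating along $\gamma$ with locally $E$-minimizing arcs supplied by Lemma~\thmref{lem:local-existence-Emin} (plus short linear segments near the wells contributing $O(\e)$), and closes the argument with the lower semicontinuity of $E$ under uniform convergence (Lemma~\thmref{lem:lsc-E-functional}) together with the identity $\sum_k E(\alpha_k)=\sum_k d(p_k,p_{k+1})\leq L(\gamma)$. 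You bypass both of those lemmas: your engine is the pointwise lower bound $d(p,q)\geq (F(p)-\delta)\,\abs{p-q}$ for nearby points away from $\Z$ --- proved by the same exit-time/confinement mechanism that the paper uses inside Lemma~\thmref{lem:local-existence-Emin}, but deployed as a lower bound on $d$ rather than to produce minimizers --- combined with the classical fact that chord sums of a Lipschitz curve recover its Euclidean arc length, and an exhaustion of $[0,1]\setminus\mathcal{T}^{\Z}_{\gamma}$ by finitely many compact intervals to neutralize the degenerate set. The trade-off: the paper's construction reuses machinery it needs elsewhere anyway (both lemmas reappear in Theorem~\thmref{thm:existence-length-min}) and is the standard length-space argument, but it leans on the existence of local minimizers and on a ``readily checked'' uniform convergence of the interpolants; your version needs neither existence nor semicontinuity, handles a curve meeting $\Z$ on an arbitrary closed set without the explicit arc decomposition, and your ordering of limits ($\delta'\to 0$, then $\delta\to 0$, then $A\uparrow E(\gamma)$) correctly keeps the constants $C$ and $C'$ under control. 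Both arguments are sound.
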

\begin{proof}[Proof of Theorem~\thmref{thm:EequalsL}]
In light of the additivity property enjoyed by both functionals $E$ and $L$ on concatenation of curves, given any $\gamma\in Lip_{\Z}([0,1];\R^N)$, it will suffice to establish this equivalence on any arc of $\gamma$ such that $\gamma$ avoids the set $\Z$ except perhaps at one or both of its endpoints. This follows since clearly an arbitrary curve can be decomposed into a union of such arcs. Pursuing the worst case scenario, with a slight abuse of notation we will then simply assume that $\gamma$ is one such arc with endpoints lying on distinct points $\p_{k_1}$ and $\p_{k_2}$ of $\Z.$


We first observe that for any partition $\{s_j\}^{J}_{j=0}$ of $[0,1]$ one has using the definition of the metric $d$ on each arc $\gamma([s_j,s_{j+1}])$
that
\[
\sum^{J-1}_{j=0}d(\gamma(s_j),\gamma(s_{j+1}))
= \sum^{J-1}_{j=0}\inf_{\beta\in Lip(\gamma(s_j),\gamma(s_{j+1}))}E(\beta)
\leq \sum^{J-1}_{j=0}E(\gamma|_{[s_j,s_{j+1}]} )=E(\gamma).
\]
Taking the supremum over all partitions $P([0,1])$ we conclude that $L(\gamma)\leq E(\gamma)$. 
\smallskip

We now turn to the task of proving the reverse inequality. We remark that in the case the curve satisfies $L(\gamma)=\infty$, then it immediately follows $E(\gamma)=\infty$ and so $L(\gamma)=E(\gamma)$. Let us then assume that $L(\gamma)<\infty$. We will argue in this case that $E(\gamma)$ is finite and further $E(\gamma)\leq L(\gamma)$. The strategy is to construct a sequence $\{\gamma_n\}\subset Lip_{\Z}([0,1];\R^N)$ with $n\to\infty$ that interpolates on a set of $n$ points on $\gamma$, where the interpolating pieces are chosen to be $E$-minimizing. 

 First choose $\e>0$ sufficiently small so that $\gamma((0,1))\subset B_{1/\e}(0)$. Then define $0<\underline{t}_{\e}<\overline{t}_{\e}<1$ through
 the conditions
 \[
 \underline{t}_{\e}:=\max\{t: \abs{\gamma(t)-\p_{k_1}}\leq2\e\},\quad \overline{t}_{\e}:=\min\{t:\abs{\gamma(t)-\p_{k_2}}\leq 2\e\}.
 \]
The claim is that $\underline{t}_{\e}\to 0$ and $\overline{t}_{\e}\to 1$ as $\e \to 0^+$. By contradiction let us suppose that there exists a sequence $\{\underline{t}_{\e_j}\}$ with $\underline{t}_{\e_j}\to t_*\in (0,1)$ as $\e_j\to 0^+$. It follows that $|\gamma(\underline{t}_{\e_j})-\p_{k_1}|=\e_j$ for every $j$ by definition of $\underline{t}_{\e_j}$. Taking the limit $j\to\infty$ in the above equality we deduce by the continuity of $\gamma$ that $|\gamma(t_*)-\p_{k_1}|=0$ and so $\gamma(t_*)=\p_{k_1}$, but this contradicts the assumption that the set $\Z$ is avoided by $\gamma$ at intermediate times. The second statement of the claim can be argued similarly.
 
For all $\e>0$ sufficiently small, one has $\gamma([\,\underline{t}_{\e},\overline{t}_{\e}])\subset B_{1/\e}(0)\setminus \bigcup^m_{i=1}B_{2\e}(\p_i)$.
  Hence,
  Lemma~\thmref{lem:local-existence-Emin} yields the existence of $r_{\e}>0$ so that any choice of points $p,q\in \gamma([\underline{t}_{\e},\overline{t}_{\e}])$ with $|p-q|<r_{\e}$ can be joined with an $E$-minimizing curve.

  For all $n$ sufficiently large, we now label $p_1:=\gamma(\underline{t}_{\e})$ and $p_{n-1}:=\gamma(\overline{t}_{\e})$ and then pick a collection of equi-spaced points $\{p_2,\ldots, p_{n-2}\}\subset \gamma([\underline{t}_{\e},\overline{t}_{\e}])$ so that $|p_k-p_{k+1}|<r_\e$ for $j=1,\ldots, n-2$. Let us write $p_k=\gamma(t_k)$ with increasing $t$-values. By virtue of Lemma~\thmref{lem:local-existence-Emin} on every pair of consecutive points we find $E$-minimizing curves, that up to reparametrization we write as $\alpha_k\in Lip([t_k,t_{k+1}];\R^N)$ with $\alpha_k(t_k)=p_k$ and $\alpha_k(t_{k+1})=p_{k+1}$ for $j=1,\ldots,n-2$. 
We define $\gamma_n\in Lip_{\Z}([0,1];\R^N)$ as the concatenation of $\alpha^-_{\text{aff}},\alpha_{1},\ldots,\alpha_{n-2},\alpha^+_{\text{aff}}$, where $\alpha^-_{\text{aff}}:[0,\underline{t}_{\e}\,]\to\R^N, \alpha^+_{\text{aff}}:[\,\overline{t}_{\e},1]\to\R^N$ are parametrizations of the linear segments $[\p_{k_1},p_1]$ and $[p_{n-1},\p_{k_2}]$, respectively, see Figure~\ref{fig:2} below.
\begin{figure}[!ht]
\centering
 \centering
  \includegraphics[width=0.8\textwidth]{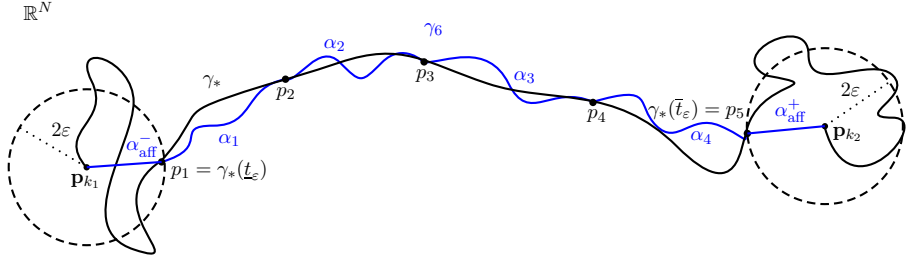}
\caption{Construction of the interpolating sequence $\{\alpha_k\}^{n-2}_{k=1}$, in the case $n=6$.}
\label{fig:2}
\end{figure}
\medskip

One readily checks that as $n\to\infty$ we have $|\gamma_n(t)-\gamma(t)|\rightrightarrows 0$  uniformly for $t\in[\,\underline{t}_{\e},\overline{t}_{\e}]$. Then Lemma~\thmref{lem:lsc-E-functional} applied on $[\,\underline{t}_{\e},\overline{t}_{\e}]$ yields
\begin{equation}\label{EminEqn1}
E(\gamma|_{[\,\underline{t}_{\e},\overline{t}_{\e}]})\leq \liminf_{n\to\infty}E(\gamma_n).
\end{equation}
On the other hand, since $\{\alpha_k\}^{n-2}_{k=1}$ are $E$-minimizers among Lipschitz continuous competitors with fixed endpoints we get
\[
 \sum^{n-2}_{k=1}E(\alpha_k)=\sum^{n-2}_{k=1}d(p_k,p_{k+1})=\sum^{n-2}_{k=1}d(\gamma(t_k),\gamma(t_{k+1}))\leq L(\gamma|_{[\,\underline{t}_{\e},\overline{t}_{\e}]}).
\] 
We conclude that for all large $n$ one has
\[
E(\gamma_n)=E(\alpha^-_{\text{aff}})+\sum^{n-2}_{k=1}E(\alpha_k)+E(\alpha^+_{\text{aff}})
\leq L(\gamma)+4M\e,
\]
where $M:=\max\{F(p):\,p\in \overline{B_{2\e}(\p_{k_1})}\cup \overline{B_{2\e}(\p_{k_2})}\}$, using the trivial bound on the two linear pieces:
\[
E(\alpha^{\pm}_{\text{aff}})\leq 2M\e.
\]
(We note that in fact $M=M_\e\to 0$ as $\e\to 0$ but we won't need this.)
 Hence,
\begin{equation}\label{EminEqn3}
\limsup_{n\to\infty}E(\gamma_n)\leq L(\gamma)+4M\e.
\end{equation}
Applying the estimates \eqref{EminEqn1} and \eqref{EminEqn3}, we conclude that
\[ 
E(\gamma|_{[\,\underline{t}_{\e},\overline{t}_{\e}]})\leq L(\gamma)+4M\e.
\]
Finally, by the claim above $\underline{t}_{\e}\to 0$ and $\overline{t}_{\e}\to 1$ as $\e\to 0$, so one has 
$E(\gamma|_{[\,\underline{t}_{\e},\overline{t}_{\e}]})\to E(\gamma)$ as $\e\to 0$. Consequently, 
\[ 
E(\gamma)\leq L(\gamma).
\]
Thus, $E(\gamma)= L(\gamma)$ for every $\gamma\in Lip_{\Z}([0,1];\R^N)$.
\end{proof}

\medskip

\subsection{Existence of minimizing geodesics}

Now we turn to the existence of $E$-minimizing curves joining any two points in $\R^N$, including the case of joining two wells of the potential, where the conformal factor $F:=\sqrt{W}$ of $d$ degenerates. 

\begin{thm}\zlabel{thm:existence-length-min}Given two distinct points $p,q\in \R^N$, there exists a curve $\gamma_*\in Lip_{\Z}(p,q)$  satisfying 
\begin{equation}
L(\gamma_*)=\inf_{\gamma\in Lip_{\Z}(p,q)}L(\gamma)=d(p,q)=E(\gamma_*).\label{geo}
\end{equation}
\end{thm}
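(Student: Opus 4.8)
The plan is to run the direct method, but---crucially---in the metric space $(\R^N,d)$ rather than in Euclidean space, since the degeneracy of $F=\sqrt W$ at the wells means a minimizing sequence may have Euclidean arc-length blowing up near the points of $\Z$ while keeping $E$ bounded. First I would record the trivial reduction: the affine segment from $p$ to $q$ lies in $Lip_\Z(p,q)$ and has finite $E$, so $d(p,q)<\infty$; moreover, by Theorem~\thmref{thm:EequalsL} together with the definition \eqref{Emini} of $d$, one has $\inf_{\gamma\in Lip_\Z(p,q)}L(\gamma)=\inf_{\gamma\in Lip_\Z(p,q)}E(\gamma)=d(p,q)$. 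Hence it suffices to exhibit a single curve $\gamma_*\in Lip_\Z(p,q)$ with $L(\gamma_*)=d(p,q)$: then Theorem~\thmref{thm:EequalsL} gives $E(\gamma_*)=L(\gamma_*)=d(p,q)$ and all the identities in \eqref{geo} follow at once.

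Next I would take a minimizing sequence $\{\gamma_n\}\subset Lip_\Z(p,q)$ with $E(\gamma_n)\to d(p,q)$, assume $E(\gamma_n)\le d(p,q)+1=:\Lambda$, and reparametrize each $\gamma_n$ proportionally to its $E$-arc-length, i.e.\ through the continuous nondecreasing function $t\mapsto E(\gamma_n|_{[0,t]})$ (this leaves $E$ unchanged and keeps $\gamma_n$ in $Lip_\Z$). The point of this normalization is that it makes the family uniformly Lipschitz in the $d$ metric: $d(\gamma_n(s),\gamma_n(t))\le E(\gamma_n|_{[s,t]})=E(\gamma_n)\,|s-t|\le\Lambda|s-t|$, and in particular every $\gamma_n$ takes values in the closed $d$-ball of radius $\Lambda$ about $p$. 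I would then observe that $(\R^N,d)$ is a proper metric space: $d$ induces the Euclidean topology (away from $\Z$ this is the local equivalence already used in the proof of Lemma~\thmref{lem:local-existence-Emin}, and near each well as well as near infinity a direct estimate using \assref{asswells}--\assref{assdecay} shows that $d(x,y)$ is small precisely when $|x-y|$ is), while a $d$-bounded set is Euclidean-bounded by the coercivity coming from \assref{assdecay}. Consequently the closed $d$-ball above is $d$-compact, and the Arzel\`a--Ascoli theorem applied in $(\R^N,d)$ yields a subsequence $\gamma_{n_k}$ converging uniformly in $d$ to a curve $\gamma_*$ that is again $\Lambda$-Lipschitz in $d$, with $\gamma_*(0)=p$, $\gamma_*(1)=q$, and $L(\gamma_*)\le\Lambda<\infty$, so $\gamma_*$ is $E$-rectifiable.

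It then remains to check that $\gamma_*$ is admissible and minimizing. For admissibility, at any time $t_0$ with $\gamma_*(t_0)\notin\Z$ one has $F$ bounded below by a positive constant on a small ball $B$ around $\gamma_*(t_0)$, and on $B$ the metric $d$ dominates a constant multiple of the Euclidean distance (any competitor curve either stays in $B$, where $E\ge c\,(\text{Euclidean length})$, or must leave $B$, picking up even more $E$-length); hence the $d$-Lipschitz bound on $\gamma_*$ upgrades to a local Euclidean-Lipschitz bound near $t_0$, so $\gamma_*\in Lip_\Z(p,q)$. For minimality, the lower-semicontinuity Lemma~\thmref{lem:lsc-length-functional}---applicable since $\gamma_{n_k}\rightrightarrows\gamma_*$ in $d$ and $\gamma_*$ is $E$-rectifiable---combined with Theorem~\thmref{thm:EequalsL} gives $L(\gamma_*)\le\liminf_k L(\gamma_{n_k})=\liminf_k E(\gamma_{n_k})=d(p,q)$, while \eqref{dlessL} gives $L(\gamma_*)\ge d(p,q)$; thus $L(\gamma_*)=d(p,q)$, and Theorem~\thmref{thm:EequalsL} once more yields $E(\gamma_*)=L(\gamma_*)=d(p,q)$, establishing \eqref{geo}.

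I expect the crux to be the properness of $(\R^N,d)$, i.e.\ securing compactness of the minimizing sequence, which is exactly where the degeneracy of $F$ at the wells bites and which is what forces the passage from the Euclidean metric to the intrinsic metric $d$ in the first place. A related subtlety worth flagging is that the limiting geodesic may legitimately pass through intermediate wells---precisely the phenomenon exploited later in Section~\zref[thesection]{sec:study-connection-problem}---so one cannot avoid the difficulty by pushing minimizing sequences away from $\Z$; the local argument used for admissibility of $\gamma_*$ is what handles this. The remaining ingredients, namely continuity of the $E$-arc-length function and invariance of $E$ under reparametrization, are routine.
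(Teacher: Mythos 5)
Your proposal is correct and follows essentially the same route as the paper: reparametrize a minimizing sequence by (normalized) $E$-arc-length to get equi-Lipschitz curves in the $d$ metric, extract a $d$-uniformly convergent subsequence via Arzel\`a--Ascoli, conclude minimality from Lemma~\thmref{lem:lsc-length-functional} together with Theorem~\thmref{thm:EequalsL}, and recover membership in $Lip_{\Z}(p,q)$ from the local equivalence of $d$ and the Euclidean metric away from $\Z$. Your explicit verification that $(\R^N,d)$ is proper is a welcome refinement of a point the paper leaves implicit, and your admissibility step reaches the same conclusion as the paper's Step 2 slightly more directly, without routing through Lemma~\thmref{lem:local-existence-Emin}.
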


\bigskip
We will refer to such a $\gamma_*$ as a \emph{minimizing geodesic between $p$ and $q$}. The proof is an application of the direct method in the context of length spaces, and is an adaptation of a proof of the Hopf-Rinow theorem for length spaces, see e.g.~\cite[pp. 35]{bridson1999metric}.

\smallskip

\begin{proof}[Proof of Theorem~\thmref{thm:existence-length-min}] The proof consists of demonstrating the existence of a curve in $Lip_{\Z}(p,q)$ that minimizes the length functional $L$. That such a minimizer also is a minimizer of $E$ then follows immediately from  Theorem \thmref{thm:EequalsL}. We will break the proof into two steps.
\medskip

{\bf Step 1. }We first exhibit a curve $\gamma_*$ with minimal length having endpoints $p$ and $q$. By definition of the metric, there is a minimizing sequence $\{\gamma_j\}\subset Lip_{\Z}(p,q)$  with $E(\gamma_j)\to d(p,q)$ as $j\to\infty$. Then by Theorem \thmref{thm:EequalsL},  $\lim_{j\to\infty}L(\gamma_j)=d(p,q)$ as well. With no loss of generality we assume $F(\gamma_j(t))>0$ for every $t\in(0,1)$ and all $j$, since if $F(\gamma_{j_0}(t_0))=0$ for some $t_0\in (0,1)$, then we can locally modify the curve $\gamma_{j_0}$ around $t_0$ so that it satisfies the above property, with $E(\gamma^{\text{mod}}_{j_0})\leq E(\gamma_{j_0})+1/j_0$. 
We now claim that appropriately parametrized, the sequence $\{\gamma_j\}$ is equi-Lipschitz in $(\R^N,d)$. For each $j$:
\begin{itemize}
\item First we reparametrize $\gamma_j$ using \emph{degenerate arc-length} 
\[ 
F(\gamma_j(\ell))|\gamma'_j(\ell)|=1, \qquad 0\leq \ell \leq E(\gamma_j).
\]
\item Then we normalize the time-scale using the energy, by introducing the parameter
\[ 
\tilde{\ell}\equiv \ell/ E(\gamma_j),\,\mbox{ so that }\, 0\leq \tilde{\ell}\leq 1.
\]
\end{itemize}
Working with this parameter, we write
\[ 
\tilde{\gamma}_j:[0,1]\to\R^N\quad\text{ with }\quad\tilde{\gamma}_j(\tilde{\ell})\equiv \gamma_j(\ell).
\]
 Fixing any $0\leq \tilde{\ell}_1\leq \tilde{\ell}_2\leq 1$, for any given $\delta>0$ it holds that $E(\gamma_j)\leq d(p,q)+\delta$, provided $j \geq j_0(\delta)$ large enough. Then we have the lower estimate
\[
\begin{array}{rl}
\tilde{\ell}_2-\tilde{\ell}_1=&
{\displaystyle \frac{\ell_2-\ell_1}{E(\gamma_j)}=\frac{1}{E(\gamma_j)}\int^{\ell_2}_{\ell_1}F(\gamma_j)|\gamma'_j|\, d\ell
\geq\frac{1}{(d(p,q)+\delta)}E(\gamma_j|_{[\ell_1,\ell_2]})}\\[0.5cm]
\geq&{\displaystyle \frac{1}{(d(p,q)+\delta)}d(\gamma_j(\ell_1),\gamma_j(\ell_2))
=\frac{1}{(d(p,q)+\delta)}d(\tilde{\gamma_j}(\tilde{\ell}_1),\tilde{\gamma}_j(\tilde{\ell}_2)).}
\end{array}
\]
Hence for all $j\geq j_0(\delta)$ we have
\[ 
d(\tilde{\gamma_j}(\tilde{\ell}_1)),\tilde{\gamma}_j(\tilde{\ell}_2))
\leq (d(p,q)+\delta)|\tilde{\ell}_1-\tilde{\ell}_2|,
\]
which shows that the family $\{\tilde{\gamma}_j\}$ is equi-Lipschitz with respect to the $d$ metric.  In addition, this family is uniformly bounded in the $d$ metric, since for any $\delta$ small one has
\[
\sup_{\tilde{\ell}\in [0,1]}d(p,\tilde{\gamma}_j(\tilde{l}))=\sup_{\tilde{\ell}\in [0,1]}d(\tilde{\gamma}_j(0),\tilde{\gamma}_j(\tilde{\ell}))\leq d(p,q)+1,
\]
which implies $\cup_{j\geq j_0(\delta)}\tilde{\gamma}_j([0,1])\subset \{\tilde{p}\in \R^N:\,d(p,\tilde{p})\leq d(p,q)+1\}$.
The Arzela-Ascoli theorem then ensures the existence of a subsequence $\{\tilde{\gamma}_{j_n}\}$ converging $d$-uniformly to some curve $\gamma_*$ that is Lipschitz in the $d$-metric. In particular, $d(p,q)+\delta$ is a bound for the Lipschitz constant of $\gamma_*$, but since $\delta>0$ is arbitrary we arrive to the same conclusion with the sharpest bound $d(p,q)$.

Now we check that $\gamma_*$ is a length-minimizing curve between $p$ and $q$. Recall that $\{\tilde{\gamma}_{j_n}\}$ is a minimizing sequence with $\tilde{\gamma}_{j_n}\rightrightarrows\gamma_*$ in the $d$-metric. Then Lemma~\thmref{lem:lsc-length-functional} and Theorem~\thmref{thm:EequalsL} apply to yield
\[ 
d(p,q)\leq L(\gamma_*)\leq 
\liminf_{n\to\infty}L(\tilde{\gamma}_{j_n})= \liminf_{n\to\infty}E(\tilde{\gamma}_{j_n})=d(p,q).
\]
Since by Theorem \thmref{thm:EequalsL} we have $L(\gamma_*)=E(\gamma_*)$ this will complete the proof of \eqref{geo} once we show that $\gamma_*$ lies in $Lip_{\Z}(p,q)$, namely that its restriction to any sub-arc that avoids $\Z$ is Lipschitz continuous with respect to the Euclidean metric.

\bigskip

{\bf Step 2.}
To this end, consider the arc $\gamma_*([a,b])$ for any $(a,b)\subset\subset [0,1]$ such that $\gamma_*\big([a,b]\big)\cap\Z=\emptyset.$
With an eye towards applying  Lemma~\thmref{lem:local-existence-Emin}, we now fix $\e$ sufficiently small so that $\gamma_*([a,b])\subset B_{1/\e}(0)\setminus\bigcup^m_{j=1}B_{2\e}(\p_j)$. 
Then for any interval $[a_1,b_1]\subset [a,b]$, we consider a partition $a_1=t_0<t_1<t_2<\ldots<t_n=b_1$ such that for each $j$ we have $\abs{\gamma(t_{j+1})-\gamma(t_j)}<r_\e$, with $r_\e$ taken from that lemma. 
We get a collection $\{\alpha_j\}^n_{j=1}$ of $E$-minimizing curves joining the endpoints $\gamma_*(t_j)$ to $\gamma_*(t_{j+1})$ for $j=0,1,\ldots,n-1$, and we know that they all avoid an $\e$-neighborhood of $\Z$. 
Thus, for each $j$ we have
\begin{equation*}
\begin{array}{rl}
{\displaystyle d\left(\gamma_*(t_j),\gamma_*(t_{j+1})\right)=E(\alpha_j)=\int^{t_{j+1}}_{t_j}F(\alpha_j)|\alpha'_j|}
&\geq  {\displaystyle \left(\min_{\bigcup_j \{p:\,\abs{p-\p_j}\geq \e\}}F(p)\right) \int^{t_{j+1}}_{t_j}|\alpha'_j|} \\[0.5cm]
&\geq {\displaystyle \left(\min_{\bigcup_j \{p:\,\abs{p-\p_j}\geq \e\}}F(p) \right)|\gamma_*(t_{j+1})-\gamma_*(t_{j})|}.
\end{array}
\end{equation*}
But since $d\left(\gamma_*(t_j),\gamma_*(t_{j+1})\right)\leq L_d\big(t_{j+1}-t_j\big)$ for each $j$, where $L_d$ denotes the Lipschitz constant of $\gamma_*$ with respect to the $d$ metric, we can sum over $j$ to find that
\[
\abs{\gamma_*(b_1)-\gamma_*(a_1)}\leq \displaystyle \left(\min_{\bigcup_j \{p:\,\abs{p-\p_j}\geq \e\}}F(p) \right)^{-1}L_d\big(b_1-a_1\big),
\]
and so $\gamma_*\in Lip_{\Z}([0,1];\R^N)$.  
\end{proof}


In passing, we mention some basic properties of minimizing geodesics.

\begin{prop}\zlabel{prop:properties-length-min} For any two distinct points $p,q\in \R^N$, consider a length-minimizing curve $\gamma_*\in Lip_{\Z}(p,q)$.
\begin{enumerate}
\item The restriction of $\gamma_*$ to any arc is length-minimizing: For any $[a,b]\subset [0,1]$,
\[ 
L(\gamma_*|_{[a,b]})=d(\gamma_*(a),\gamma_*(b)).
\]\\[-0.5cm]

\item The length of $\gamma_*$ is achieved by computing over any finite partition $\mathcal{P}_0$ of $[0,1]$,
\[ 
L(\gamma_*)=\sum_{t_i\in\mathcal{P}_0}d(\gamma_*(t_i),\gamma_*(t_{i+1})).
\] 
\end{enumerate}
\end{prop}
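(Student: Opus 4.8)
The plan is to establish part (1) first — the sub-arc property — and then read off part (2) as an immediate corollary. The three ingredients I would use are all already available: the additivity of $E$ under concatenation of curves (hence, by Theorem~\thmref{thm:EequalsL}, the additivity of $L$), the triangle inequality for the metric $d$, and the elementary bound \eqref{dlessL}. Whenever I write $L(\gamma_*|_{[a,b]})$ or $E(\gamma_*|_{[a,b]})$ I would silently reparametrize the sub-arc affinely onto $[0,1]$; since both functionals are parametrization-independent this is harmless, and the restriction of $\gamma_*\in Lip_{\Z}(p,q)$ to a subinterval, so reparametrized, still lies in $Lip_{\Z}$, so all the quantities make sense.

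For part (1), I would fix $[a,b]\subset[0,1]$, decompose $\gamma_*$ as the concatenation of $\gamma_*|_{[0,a]}$, $\gamma_*|_{[a,b]}$, $\gamma_*|_{[b,1]}$, and use additivity together with the identity $L(\gamma_*)=d(p,q)$ from Theorem~\thmref{thm:existence-length-min} to write
\[
d(p,q)=L(\gamma_*)=L(\gamma_*|_{[0,a]})+L(\gamma_*|_{[a,b]})+L(\gamma_*|_{[b,1]}).
\]
Applying \eqref{dlessL} to each of the three pieces gives $d(p,q)\geq d(p,\gamma_*(a))+d(\gamma_*(a),\gamma_*(b))+d(\gamma_*(b),q)$, while the triangle inequality for $d$ bounds the right-hand side below by $d(p,q)$. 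Hence every inequality above is an equality, and in particular $L(\gamma_*|_{[a,b]})=d(\gamma_*(a),\gamma_*(b))$, which is (1).

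For part (2), given a finite partition $\mathcal{P}_0=\{0=t_0<\cdots<t_n=1\}$, the inequality $L(\gamma_*)\geq\sum_i d(\gamma_*(t_i),\gamma_*(t_{i+1}))$ is immediate from the definition \eqref{Ldefn} of $L$ as a supremum over partitions. For the reverse inequality I would apply additivity of $L$ along the subdivision $\mathcal{P}_0$ and then part (1) to each subinterval $[t_i,t_{i+1}]$, obtaining $L(\gamma_*)=\sum_i L(\gamma_*|_{[t_i,t_{i+1}]})=\sum_i d(\gamma_*(t_i),\gamma_*(t_{i+1}))$, as claimed.

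I do not expect a real obstacle here; the substance is already contained in Theorems~\thmref{thm:EequalsL} and~\thmref{thm:existence-length-min}. The only point deserving a word of care is the additivity of $L$ under concatenation for this degenerate length structure: rather than re-deriving it, I would transfer it from the obvious additivity of the integral $E$ via the identity $L=E$ on $Lip_{\Z}$. The degenerate sub-case $\gamma_*(a)=\gamma_*(b)$ with $a<b$ is harmless — the sandwich above then forces $L(\gamma_*|_{[a,b]})=d(\gamma_*(a),\gamma_*(a))=0$, in agreement with both stated formulas.
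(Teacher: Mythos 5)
Your argument is correct and uses exactly the same ingredients as the paper's proof (additivity of $L$ on concatenations, the bound \eqref{dlessL}, the triangle inequality for $d$, and $L(\gamma_*)=d(p,q)$); the paper merely phrases part (1) as a proof by contradiction where you collapse the sandwich of inequalities directly, and part (2) is handled identically in both. This is essentially the same proof.
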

\medskip
\begin{proof}[Proof of Proposition~\thmref{prop:properties-length-min}]
For the first point, if $\gamma_*$ were not length-minimizing in some subinterval 
$[a_0,b_0]$ then $d(\gamma_*(a_0),\gamma_*(b_0))<L(\gamma_*|_{[a_0,b_0]})$, but this directly contradicts the length-minimality of the entire curve. Indeed, using the triangle inequality, and the additivity of the length functional on concatenations of curves one has
\[
\begin{array}{rl}
d(\gamma_*(0),\gamma_*(1))\leq& 
d(\gamma_*(0),\gamma_*(a_0))+d(\gamma_*(a_0),\gamma_*(b_0))+d(\gamma_*(b_0),\gamma_*(1))\\[0.3cm]
< & {\displaystyle L(\gamma_*|_{[0,a_0]})+L(\gamma_*|_{[a_0,b_0]})+L(\gamma_*|_{[b_0,1]})=L(\gamma_*).}
\end{array}
\]
This proves the length-minimality of any arc $\gamma_*([a,b])$ for every $[a,b]\subset[0,1]$.

For the second point, given any partition $\mathcal{P}_0=\{t_i\}^{I+1}_{i=0}$ of $[0,1]$, we use the length-minimality result just proved on every arc 
$\gamma_*([t_i,t_{i+1}])$. We deduce
\begin{align*}
L(\gamma_*)=
\sum^I_{i=0} L(\gamma_*|_{[t_i,t_{i+1}]})
=\sum^I_{i=0}d(\gamma_*(t_i),\gamma_*(t_{i+1})),
\end{align*} 
in light of the $L$-additivity on concatenations of curves. 
\end{proof}
\smallskip

Finally we address the regularity of minimizing geodesics under further smoothness assumptions on $F$  away from $\Z$, or equivalently, on $W$. 

\begin{prop}\zlabel{prop:regularity} In addition to assumptions \emph{\assref{asswells}} and  \emph{\assref{assdecay}}, assume  that $F\in C^{1,\alpha}_{loc}(\R^N\setminus\Z)$ with $\alpha\in(0,1)$. Then for distinct points $\p_j,\p_k\in \Z$, a minimizing geodesic $\gamma_*$ joining $\p_j$ to $\p_k$ admits a $C^{2,\alpha}$-parametrization along any connected arc that avoids $\Z$.
\end{prop}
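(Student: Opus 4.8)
The plan is to show that on any connected arc avoiding $\Z$, a minimizing geodesic $\gamma_*$ satisfies — after a suitable reparametrization — the Euler-Lagrange system associated with the $E$-functional, and then bootstrap regularity using the $C^{1,\alpha}_{loc}$ hypothesis on $F$. The heart of the matter is that on such an arc, away from the degeneracy locus, the problem is a genuine (non-degenerate) geodesic problem for the conformal metric $F^2\,|dx|^2$, so one expects the classical smooth theory of geodesics to apply once the variational characterization is in place.

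\textbf{Step 1: Localize and pass to a non-degenerate sub-problem.} Fix a connected arc $\gamma_*([a,b])$ with $\gamma_*([a,b])\cap\Z=\emptyset$. Choose $\e>0$ small so that $\gamma_*([a,b])\subset B_{1/\e}(0)\setminus\bigcup_j B_{2\e}(\p_j)$; on this compact region $F$ is bounded above and below by positive constants and lies in $C^{1,\alpha}$. By Proposition~\thmref{prop:properties-length-min}(1), the restriction $\gamma_*|_{[a,b]}$ is length-minimizing between its endpoints, hence an $E$-minimizer among all curves in $Lip(\gamma_*(a),\gamma_*(b))$ that stay in a fixed small neighborhood of the arc (any competitor far from the arc has larger $E$ by the lower bound on $F$ and a length argument as in Lemma~\thmref{lem:local-existence-Emin}). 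Reparametrize $\gamma_*|_{[a,b]}$ by constant Euclidean speed; then it is a Lipschitz $E$-minimizer in a region where $F\ge c_\e>0$.

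\textbf{Step 2: Euler-Lagrange and first regularity.} For the non-degenerate functional $E(\gamma)=\int F(\gamma)|\gamma'|$ with $F$ bounded below, a constant-Euclidean-speed minimizer is well known to coincide (up to reparametrization) with a minimizer of the Dirichlet-type energy $\int \tfrac12 F(\gamma)^2|\gamma'|^2$, by the standard Cauchy-Schwarz/Jensen argument exploiting reparametrization invariance. The latter is a regular one-dimensional variational problem with $C^{1,\alpha}$ integrand in the space variable and quadratic (hence strictly convex, uniformly elliptic) dependence on $\gamma'$; its minimizers satisfy the Euler-Lagrange equation
\[
\frac{d}{dt}\bigl(F(\gamma_*)^2\gamma_*'\bigr) = F(\gamma_*)\,\nabla F(\gamma_*)\,|\gamma_*'|^2
\]
in the weak sense. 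From the weak form one gets that $F(\gamma_*)^2\gamma_*'$ is $C^1$ (its distributional derivative is continuous, since the right-hand side is continuous in $t$ given $\gamma_*\in C^{0,1}$ and $\nabla F$ continuous), hence $\gamma_*'$ is $C^1$ (dividing by the positive $C^1$ function $F(\gamma_*)^2$), so $\gamma_*\in C^2$ on $[a,b]$ with this parametrization.

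\textbf{Step 3: Bootstrap to $C^{2,\alpha}$.} With $\gamma_*\in C^2$, the right-hand side $F(\gamma_*)\nabla F(\gamma_*)|\gamma_*'|^2$ is a composition of $C^{1,\alpha}$ functions with a $C^1$ (indeed $C^2$) curve, hence lies in $C^{0,\alpha}$; likewise $F(\gamma_*)^2$ is $C^{1,\alpha}$ and bounded below. Solving the ODE $\gamma_*'' = \bigl[F(\gamma_*)\nabla F(\gamma_*)|\gamma_*'|^2 - 2F(\gamma_*)(\nabla F(\gamma_*)\cdot\gamma_*')\gamma_*'\bigr]/F(\gamma_*)^2$ for $\gamma_*''$, the right side is $C^{0,\alpha}$, so $\gamma_*''\in C^{0,\alpha}$, i.e. $\gamma_*\in C^{2,\alpha}$ on the arc. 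Since the arc $[a,b]\subset\subset[0,1]$ avoiding $\Z$ was arbitrary, and $C^{2,\alpha}$-regularity and the ODE are preserved under $C^{2,\alpha}$-reparametrization, $\gamma_*$ admits a $C^{2,\alpha}$-parametrization along every connected arc avoiding $\Z$.

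\textbf{Main obstacle.} The delicate point is Step 2: justifying rigorously that the constant-speed $E$-minimizer is an honest weak solution of the Euler-Lagrange equation. One must rule out that the minimizer has a vanishing (Euclidean) speed on a positive-measure set — which is why reparametrizing by constant speed, legitimate here because the arc is genuinely rectifiable and lies in a region where $F$ is bounded away from $0$, is essential — and one must handle the non-smoothness of the integrand $F(\gamma)|\gamma'|$ at $\gamma'=0$ by passing to the equivalent quadratic functional before writing down the Euler-Lagrange equation. Once that reduction is carried out cleanly, the rest is the routine elliptic bootstrap above.
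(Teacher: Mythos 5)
Your overall strategy is sound and genuinely different from the paper's. The paper reparametrizes the arc by a multiple of Euclidean arc-length, computes the first variation of $E$ itself (after discarding intervals where $\gamma_*'=0$ so that $\abs{\gamma_*'}$ is differentiable in the variation parameter), obtains the weak Euler--Lagrange system $\frac{d}{ds}\bigl(F(\gamma_*)\frac{d\gamma_*}{ds}\bigr)=l^2\nabla_p F(\gamma_*)$, and then runs the same $H^2\to C^{1,\alpha}\to C^{2,\alpha}$ bootstrap you describe. You instead pass to the quadratic functional $J(\gamma)=\int\frac12F(\gamma)^2\abs{\gamma'}^2$, which neatly sidesteps the non-differentiability of the integrand at $\gamma'=0$; your localization via Proposition~\thmref{prop:properties-length-min}(1) is exactly the right way to reduce to a non-degenerate problem.

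There is, however, a concrete slip in Step 2. Cauchy--Schwarz gives $E(\gamma)\le\sqrt{2J(\gamma)}$ with equality precisely when the \emph{conformal} speed $F(\gamma)\abs{\gamma'}$ is constant, not when the Euclidean speed $\abs{\gamma'}$ is constant. Hence the parametrization you fix in Step 1 is in general not a minimizer, nor even a critical point, of $J$: its constant-conformal-speed reparametrization is an admissible competitor with strictly smaller $J$-value unless $F\circ\gamma_*$ is constant on the arc (equivalently, any $J$-critical curve conserves $F(\gamma)^2\abs{\gamma'}^2$, which together with $\abs{\gamma'}\equiv\mathrm{const}$ would force $F$ constant along the arc). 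So the Euler--Lagrange equation $\frac{d}{dt}(F^2\gamma_*')=F\,\nabla F(\gamma_*)\abs{\gamma_*'}^2$ does not hold for the curve as you have parametrized it. The repair is one line: reparametrize the sub-arc so that $F(\gamma_*)\abs{\gamma_*'}$ is constant (legitimate since $F$ is bounded above and below on the region containing the arc); that parametrization does minimize $J$, the right-hand side $F\,\nabla F(\gamma_*)\abs{\gamma_*'}^2$ is then continuous because $\abs{\gamma_*'}^2=c^2/F(\gamma_*)^2$, and your Steps 2 and 3 go through (with the minor extra iteration that $F(\gamma_*)^2$ is at first only Lipschitz, giving $\gamma_*\in C^{1,1}$ before one upgrades to $C^2$ and then $C^{2,\alpha}$). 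Since the proposition only asks for the existence of some $C^{2,\alpha}$ parametrization, this change of parametrization costs nothing.
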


\medskip

\begin{rem}
If we further assume that $F\in C^{k+1,\alpha}_{loc}(\R^N\setminus \Z)$ for some $k\geq 1$ and $\alpha\in(0,1)$, then the same conclusion in Proposition~\thmref{prop:regularity} holds with $C^{k+2,\alpha}$-parametrizations.
\end{rem}
\smallskip
\begin{proof}[Proof of Proposition~\thmref{prop:regularity}] 
Fix any interval $[a,b]\subset (0,1)$ such that $\gamma_*([a,b])\cap \Z=\emptyset.$
The regularity established in Step 2 of Theorem~\thmref{thm:existence-length-min} yields that this arc of $\gamma_*$ is Lipschitz continuous with respect to the Euclidean metric. Hence, by the Rademacher's Theorem $\gamma'_*$ exists a.e. in $[a,b]$. We note that if $\gamma'_*(t)= 0$ for a.e. $t$ in some interval $I\subset [a,b]$, then we can simply reparametrize this curve by removing $I$ and so with no loss of generality we may assume that  $|\gamma'_*(t)|\neq 0$ for a.e. $t\in [a,b]$. 
Now we can reparametrize $\gamma_*$ restricted to $[a,b]$ by a multiple of Euclidean arc-length, choosing $s=s(t):=(1/l)\int^t_a |\gamma'_*(\tau)|d\tau$ with $l=\int^b_a|\gamma'_*(t)|dt$ so that in the new parametrization of this arc one has $\gamma_*:[0,1]\to\R^N$ satisfying
\[
\abs{\dfrac{d\gamma_*}{ds}(s)}\equiv l\quad\mbox{for a.e}\;\; s\in (0,1).
\] 

Taking an arbitrary curve $\gamma\in Lip([0,1];\R^N)$ satisfying $\gamma(0)=\gamma(1)=0$, we find that the arc of the minimizing geodesic $\gamma_*$ under consideration satisfies the criticality condition
\[
\begin{array}{rl}
0=\delta E(\gamma_*;\gamma)
=&{\displaystyle \frac{d}{d\lambda}\bigg|_{\lambda=0}\int^{1}_{0}F(\gamma_*+\lambda\gamma)\abs{\frac{d\gamma_*}{ds}+\lambda\frac{d\gamma}{ds}}ds}\\[0.5cm]
=&{\displaystyle \int^{1}_{0}\{(1/l)F(\gamma_*)\frac{d\gamma_*}{ds}\cdot\frac{d\gamma}{ds}+l\nabla_p F(\gamma_*)\cdot\gamma\} ds,}
\end{array}
\]
and so is a weak solution of the Euler-Lagrange equation
\[
\dfrac{d}{ds}(F(\gamma_*)\dfrac{d\gamma_*}{ds})=l^2\nabla_p F(\gamma_*) \;\;\mbox{ on }\,\, (0,1).
\]
Now we may apply standard regularity theory. Since $F(\gamma_*)$ is a positive, Lipschitz continuous
function on $(0,1)$ and the right-hand side lies in $L^2((0,1);\R^n)$, we conclude that $\gamma_*\in H^{2}_{loc}((0,1);\R^N)$ (see \cite{gilbarg2001elliptic}, Thm.8.8). Hence, by Sobolev embedding we have $\gamma_*\in C^{1,\alpha}_{loc}((0,1);\R^N)$.
  But then, we can view $\gamma_*$ as a weak solution of the following differential equation
\[
F(\gamma_*)\dfrac{d^2\gamma_*}{ds^2}=-(\nabla_p F(\gamma_*)\cdot \dfrac{d\gamma_*}{ds})\dfrac{d\gamma_*}{ds}+l^2\nabla_pF(\gamma_*)=:G \quad \mbox{ on }\quad (0,1),
\]
with $G\in C^{0,\alpha}_{loc}((0,1);\R^N)$. It then immediately follows that
$\gamma_*\in C^{2,\alpha}_{loc}((0,1);\R^N)$.

In case $F\in C^{k+1,\alpha}_{loc}(\R^N\setminus \Z)$,  a standard bootstrap argument allows one to deduce that $\gamma_*\in C^{k+2,\alpha}_{loc}((0,1);\R^N)$ for $k\geq 2$.
\end{proof}

\bigskip

\section{Existence of heteroclinic connections}\plainsection\zlabel{sec:study-connection-problem}
\subsection{Existence of heteroclinics through re-parametrization of minimizing geodesics}

In this section we establish the existence of heteroclinic connections between two wells of the multi-well potential $W=F^2$. Throughout, we make the following assumptions: 
\medskip
\begin{itemize}
\item[\assref{asswells}] The zero set $\Z$ of $W$ is given by $\Z=\{\p_1,\ldots \p_m\}$ so that $W(\p_1)=\ldots =W(\p_m)=0$, and $W>0$ elsewhere.\\[-0.2cm]
\item[\assref{assdecay}] $\liminf_{|p|\to\infty}W(p)>0$.\\[-0.2cm]
\item[\asslabel{assregularity}] $W\in C^{1,\alpha}_{loc}(\R^N\setminus\Z)$.\\[-0.2cm]
\item[\asslabel{assgrowthbdd}] There are positive numbers $C$ and $\delta$ such that 
$W(p)\leq C|p-\p_j|^2$, for $|p-\p_j|<\delta$ and $j=1,\ldots, m$.
\end{itemize}
\bigskip

\smallskip

 For the study of heteroclinic connections we re-introduce the functional  
$
H:H^1_{loc}(\R,\R^N)\to\R$ given by
\[ 
H(U) :=\int^{+\infty}_{-\infty}\frac{1}{2}\abs{U'}^2+W(U). 
\]

 Following the scheme developed in \cite{sternberg1991vector}, we study the connection problem by carrying out an analysis of the link between the geometric problem 
\[
\mbox{(GP)} \hspace{1.2cm} \inf\limits_{\gamma\in Lip_{\Z}(\p_j,\p_k)}\quad E(\gamma)
\]
that was the focus of Section 2, and the variational problem
\[
\mbox{(HP)} \hspace{0.7cm} \inf\limits_{\substack{U\in H^1_{loc}(\R,\R^N)\\ U(x)\to \p_j \text{ as }x\to -\infty\\U(x)\to \p_k \text{ as }x\to +\infty  }} H(U)
\]
whose critical points yield heteroclinic connections between two distinct wells $\p_j,\p_k\in\Z$.
 
We recall that the analysis of  Section 2 yielded solutions to (GP), namely the minimizing  geodesics of Theorem~\thmref{thm:existence-length-min}. We will argue that under an appropriate parametrization, these geodesics give rise to solutions to problem (HP) as well, though as we will see, the connected wells may differ from $\p_j$ and $\p_k$.
Hence, in particular they will satisfy the associated Euler-Lagrange equation, 
\[
\begin{array}{l} 
U''_*-\nabla_u W(U_*)=0\quad\mbox{ in }\;\; (-\infty,\infty),\\[0.25cm]
U_*(-\infty)=\p_j,\quad U_*(+\infty)=\p_k,
\end{array} 
\]
thus giving rise to a {\it heteroclinic connection} $U_*$ between two elements $\p_j$ and $\p_k$ of $\Z$. 
\medskip

The main result establishes the existence of an $H$-minimizing heteroclinic connection between two wells of $W$, provided the trajectory of some minimizing geodesic solving the geometric problem (GP) joining these wells does not visit some other zero of $W$ along the way.

\begin{thm}\zlabel{thm:existence-connection} For distinct points $\p_j,\p_k\in \Z$ consider a minimizing geodesic $\gamma_*\in Lip_{\Z}(\p_j,\p_k)$. Let us write $0=t_1<t_2<t_3<\ldots<t_J=1$ \emph{(}with $J\geq 2$\emph{)} for the times when $\gamma_*(t)\in\Z$, so that, in particular, $\gamma_*(t_1)=\p_j$ and $\gamma_*(t_J)=\p_k$. 
Then for every $i\in\{1,\ldots, J-1\}$ there exists an $H$-minimizing heteroclinic connection between the wells $\gamma_*(t_i)$ and 
$\gamma_*(t_{i+1})$.
\end{thm}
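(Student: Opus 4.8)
The plan is to reduce Theorem~\thmref{thm:existence-connection} to the case $J=2$ — i.e., a minimizing geodesic $\gamma_*\in Lip_{\Z}(\p_j,\p_k)$ whose trajectory meets $\Z$ only at the two endpoints — since by Proposition~\thmref{prop:properties-length-min}(1) each restriction $\gamma_*|_{[t_i,t_{i+1}]}$ is itself a minimizing geodesic between the consecutive wells $\gamma_*(t_i)$ and $\gamma_*(t_{i+1})$, and it avoids $\Z$ at intermediate times. So fix such an arc, relabel its endpoints $\p_a:=\gamma_*(t_i)$, $\p_b:=\gamma_*(t_{i+1})$, and rename it $\gamma_*:[0,1]\to\R^N$ with $\gamma_*^{-1}(\Z)=\{0,1\}$. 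By Proposition~\thmref{prop:regularity} (which applies thanks to \assref{assregularity}), along any compact subinterval of $(0,1)$ this arc admits a $C^{2,\alpha}$ parametrization, and by the argument in Step~2 of Theorem~\thmref{thm:existence-length-min} it is locally Lipschitz in the Euclidean metric on $(0,1)$.

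The core construction is the \emph{equipartition reparametrization}. On any subinterval $[a',b']\subset\subset(0,1)$ where $\gamma_*$ is Euclidean-Lipschitz with $|\gamma_*'|\neq 0$ a.e., I would introduce a new variable $x$ via $dx = \tfrac{|\gamma_*'(t)|}{\sqrt{2}\,\sqrt{W(\gamma_*(t))}}\,dt$ (equivalently, require $\tfrac12|U'(x)|^2 = W(U(x))$ for the reparametrized curve $U$), so that the resulting $U$ satisfies equipartition of energy pointwise and $H(U)=\sqrt{2}\,E(\gamma_*)=\sqrt{2}\,d(\p_a,\p_b)$. The first substantive point is to check that as $t\to 0^+$ and $t\to 1^-$ the new variable $x$ runs to $-\infty$ and $+\infty$ respectively; this is where assumption \assref{assgrowthbdd} enters — the quadratic upper bound $W(p)\le C|p-\p_l|^2$ near each well forces $\sqrt{W}$ small enough along the curve near the endpoints that the integral defining $x$ diverges (heuristically, near a well the geodesic behaves like a ray and $dx \sim dr/(\sqrt{2C}\,r)$, whose integral diverges logarithmically). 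One must be slightly careful that $\gamma_*$ genuinely approaches the wells — but it does, since its endpoints are $\p_a,\p_b$ and it is continuous — and that $|\gamma_*'|$ does not vanish on sets that would make the time-change degenerate, which one handles by discarding such intervals as in the proof of Proposition~\thmref{prop:regularity}. After this reparametrization $U:\R\to\R^N$ with $U(-\infty)=\p_a$, $U(+\infty)=\p_b$, $U\in H^1_{loc}$, and $H(U)=\sqrt{2}\,d(\p_a,\p_b)$.

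Next I would show $U$ \emph{minimizes} $H$ in the class of curves connecting $\p_a$ to $\p_b$. For any competitor $V$ in that class, the elementary inequality $H(V)\ge\sqrt{2}\,E(V)\ge\sqrt{2}\,d(\p_a,\p_b)$ — obtained by completing the square $\tfrac12|V'|^2+W(V)\ge\sqrt{2}\sqrt{W(V)}|V'|$ and then using that the trace of $V$ is an admissible (or limiting) competitor for (GP), modulo a standard approximation to land in $Lip_\Z$ — combined with $H(U)=\sqrt{2}\,d(\p_a,\p_b)$ gives $H(U)\le H(V)$. A minor technical wrinkle is that $V$ need only be $H^1_{loc}$ with limits at $\pm\infty$, so its image must be truncated/reparametrized to a genuine $Lip_\Z$ curve on $[0,1]$ to compare with $d$; this is routine since finite $H$-energy forces Euclidean rectifiability on regions bounded away from $\Z$, and the tails contribute arbitrarily little to $E$ by \assref{assgrowthbdd} again. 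Finally, being an $H$-minimizer among curves joining $\p_a$ to $\p_b$ with finite energy, $U$ is a weak, hence (by \assref{assregularity} and elliptic bootstrap as in Proposition~\thmref{prop:regularity}) classical, solution of the Euler–Lagrange system $U''=\nabla_u W(U)$ on $\R$, giving the desired heteroclinic connection between $\gamma_*(t_i)$ and $\gamma_*(t_{i+1})$.

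The main obstacle is the endpoint/asymptotic analysis of the equipartition reparametrization: showing rigorously that the logarithmic divergence of $x$ at each well actually occurs (so the reparametrized curve is defined on all of $\R$ with the correct limits), and handling the possibility that the geodesic oscillates or that $|\gamma_*'|$ behaves badly near the wells. Everything else — the algebraic inequality $H\ge\sqrt{2}E$, the identification $E=d$ on the geodesic, and the regularity upgrade — is either already in hand from Section~2 or is a standard direct-method/bootstrap argument.
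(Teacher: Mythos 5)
Your proposal is correct and follows essentially the same route as the paper: restrict to a sub-arc avoiding $\Z$ at intermediate times, reparametrize by the equipartition parameter (using \assref{assgrowthbdd} to get the logarithmic divergence of $x$ at the wells, which is exactly the content of Lemma~\thmref{lem:equip-energy}), and conclude minimality of $H$ from $H\geq\sqrt{2}E$ together with $H(U_*)=\sqrt{2}E(U_*)=\sqrt{2}\,d$. Your explicit appeal to Proposition~\thmref{prop:properties-length-min}(1) for the reduction to $J=2$, and your care with the $\sqrt{2}$ factor and with comparing $H^1_{loc}$ competitors to $Lip_{\Z}$ curves, are points the paper treats only implicitly.
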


\begin{cor}\zlabel{cor:offZ} If $J=2$ in Theorem~\thmref{thm:existence-connection}, that is, if a minimizing geodesic connecting two zeros $\p_j$ and $\p_k$ of $W$ avoids any other zeros of $\, W$ along the way,  or equivalently, if the strict triangle inequality 
\[
d(\p_j,\p_k)<d(\p_j,\p_\ell)+d(\p_\ell,\p_k)\quad\mbox{holds for all }\;\p_\ell\in \Z\setminus\{\p_j,\p_k\},
\]
 then under an equipartition parametrization, this geodesic represents an $H$-minimizing connection between the two zeros.
\end{cor}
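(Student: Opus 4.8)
The plan is to read the conclusion off from Theorem~\thmref{thm:existence-connection}, applied with $J=2$, after reconciling the three equivalent phrasings of the hypothesis. That ``$J=2$'' is the same as ``$\gamma_*$ meets $\Z$ only at its endpoints'' is immediate from the definition of the times $t_1<\cdots<t_J$: with $J=2$ they are just $t_1=0$ and $t_2=1$, and $\gamma_*(t_1)=\p_j$, $\gamma_*(t_2)=\p_k$. To tie this to the strict triangle inequality I would argue both directions. If the strict inequality holds and some minimizing geodesic $\sigma$ from $\p_j$ to $\p_k$ passed through a well $\p_\ell\in\Z\setminus\{\p_j,\p_k\}$ at a time $s\in(0,1)$, then part (1) of Proposition~\thmref{prop:properties-length-min} together with $L$-additivity on concatenations would force $d(\p_j,\p_k)=L(\sigma)=L(\sigma|_{[0,s]})+L(\sigma|_{[s,1]})=d(\p_j,\p_\ell)+d(\p_\ell,\p_k)$, contradicting strictness; hence \emph{every} minimizing geodesic between $\p_j$ and $\p_k$ has $J=2$, which is the intended meaning of ``a minimizing geodesic avoids any other zeros,'' and in particular the $\gamma_*$ at hand does. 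Conversely, if the strict inequality fails for some $\p_\ell$ then, since $\leq$ always holds, $d(\p_j,\p_k)=d(\p_j,\p_\ell)+d(\p_\ell,\p_k)$; concatenating minimizing geodesics $\p_j\to\p_\ell$ and $\p_\ell\to\p_k$ supplied by Theorem~\thmref{thm:existence-length-min} then yields, via $E=L$ (Theorem~\thmref{thm:EequalsL}) and additivity, a minimizing geodesic from $\p_j$ to $\p_k$ with $J\geq3$.

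Granting $J=2$, Theorem~\thmref{thm:existence-connection} with its only admissible index $i=1$ produces an $H$-minimizing heteroclinic connection between $\gamma_*(t_1)=\p_j$ and $\gamma_*(t_2)=\p_k$. To identify it with $\gamma_*$ itself in an equipartition parametrization, I would reparametrize the curve $\gamma_*$ by a new variable $x\in\R$ for which $\frac12|U_*'|^2=W(U_*)$ a.e.; the growth bound \assref{assgrowthbdd} is exactly what guarantees that approaching either endpoint consumes an infinite amount of $x$, so that $U_*$ is defined on all of $\R$ with $U_*(-\infty)=\p_j$ and $U_*(+\infty)=\p_k$. For any admissible competitor $V$ one has the pointwise inequality $\frac12|V'|^2+W(V)\geq\sqrt2\,\sqrt{W(V)}\,|V'|$, hence $H(V)\geq\sqrt2\,E(V)\geq\sqrt2\,d(\p_j,\p_k)$; and since the equipartition choice makes this an equality throughout, $H(U_*)=\sqrt2\,E(\gamma_*)=\sqrt2\,d(\p_j,\p_k)$, so $U_*$ is $H$-minimizing. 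Standard regularity theory, using \assref{assregularity}, then upgrades the minimizer $U_*$ to a classical solution of \eqref{ConnectionEqn}, i.e. to a genuine heteroclinic connection between $\p_j$ and $\p_k$.

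I do not expect a serious obstacle, since Theorem~\thmref{thm:existence-connection} already carries the analytic weight; the Corollary is essentially the bookkeeping of the case $i=1$. The two points that do need a short argument are the equivalence with the strict triangle inequality --- resting on the restriction-is-minimizing property of geodesics (Proposition~\thmref{prop:properties-length-min}) and the concatenation argument above --- and the verification that the equipartition reparametrization of $\gamma_*$ actually reaches $x=\pm\infty$, which is precisely where \assref{assgrowthbdd} is used. One should also note the mild logical point that the equivalence with the strict triangle inequality is correct when ``a minimizing geodesic avoids the other zeros'' is read universally (no minimizing geodesic visits a third well), whereas for the conclusion itself only the weaker property that the particular $\gamma_*$ under consideration has $J=2$ is actually required.
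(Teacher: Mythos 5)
Your proposal is correct and follows essentially the same route as the paper: the Corollary is read off from the $J=2$ case of Theorem~\thmref{thm:existence-connection}, whose proof is exactly the equipartition reparametrization of $\gamma_*$ combined with the pointwise inequality $H\geq\sqrt{2}\,E$ and the use of \assref{assgrowthbdd} to ensure the parameter $x$ exhausts all of $\R$. Your additional verification of the equivalence between ``$J=2$'' and the strict triangle inequality (via Proposition~\thmref{prop:properties-length-min} and concatenation), together with the remark that the equivalence requires the universal reading, is a correct piece of bookkeeping that the paper leaves implicit.
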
 
\medskip

\begin{cor}[(Two-well case)]\zlabel{cor:two-well-case}
Assume the zero set $\Z$ of $W$ consists of exactly two points, $\p_1$ and $\p_2$.  Then there exists an $H$-minimizing heteroclinic connection between these wells.
\end{cor}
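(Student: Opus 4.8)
The plan is to read this off from the results of Section~\zref[thesection]{sec:study-connection-problem}, exploiting that $\p_1$ and $\p_2$ are the only wells available. First I would invoke Theorem~\thmref{thm:existence-length-min} to obtain a minimizing geodesic $\gamma_*\in Lip_{\Z}(\p_1,\p_2)$ with $L(\gamma_*)=E(\gamma_*)=d(\p_1,\p_2)$. The only delicate point is that a priori $\gamma_*$ could pass through $\p_1$ or $\p_2$ at intermediate times, so the integer $J$ in Theorem~\thmref{thm:existence-connection} need not obviously equal $2$; I would remove this possibility by a trimming argument. Set $a:=\sup\{t\in[0,1]:\gamma_*(t)=\p_1\}$ and $b:=\inf\{t\in[0,1]:\gamma_*(t)=\p_2\}$. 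By continuity of $\gamma_*$ these extrema are attained, so $\gamma_*(a)=\p_1$ and $\gamma_*(b)=\p_2$, and $a<b$ since $\p_1\neq\p_2$.

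Next, using part (1) of Proposition~\thmref{prop:properties-length-min}, $L(\gamma_*|_{[0,a]})=d(\p_1,\p_1)=0$ and $L(\gamma_*|_{[b,1]})=d(\p_2,\p_2)=0$; combined with the additivity of $L$ under concatenation this forces $L(\gamma_*|_{[a,b]})=L(\gamma_*)=d(\p_1,\p_2)$, so $\gamma_*|_{[a,b]}$, reparametrized affinely onto $[0,1]$, is again a minimizing geodesic from $\p_1$ to $\p_2$ and still belongs to $Lip_{\Z}(\p_1,\p_2)$. By maximality of $a$ this trimmed curve never equals $\p_1$ on $(a,b]$, and by minimality of $b$ it never equals $\p_2$ on $[a,b)$; since $\Z=\{\p_1,\p_2\}$ it therefore meets $\Z$ only at its endpoints, i.e. $J=2$ in the notation of Theorem~\thmref{thm:existence-connection}.

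It then remains only to apply Theorem~\thmref{thm:existence-connection} (or equivalently Corollary~\thmref{cor:offZ}, whose strict-triangle-inequality hypothesis over $\Z\setminus\{\p_1,\p_2\}=\emptyset$ is vacuous) to this trimmed geodesic with $j=1$, $k=2$, $i=1$, which produces an $H$-minimizing heteroclinic connection between $\gamma_*(t_1)=\p_1$ and $\gamma_*(t_2)=\p_2$. I do not expect a genuine obstacle: all the substantive work---existence of minimizing geodesics and the bridge from (GP) to (HP)---is already done in Theorems~\thmref{thm:existence-length-min} and~\thmref{thm:existence-connection}. The one step that genuinely needs care is the trimming, which excludes the degenerate case of a minimizing geodesic lingering at a well and guarantees Theorem~\thmref{thm:existence-connection} applies with $J=2$ rather than a larger $J$ whose intermediate ``wells'' would just be repetitions of $\p_1$ or $\p_2$.
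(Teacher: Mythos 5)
Your proposal is correct in substance, and it is essentially the route the paper intends: Corollary~\thmref{cor:two-well-case} is stated without a separate proof, as an immediate consequence of Theorem~\thmref{thm:existence-connection}, and your reduction to the case $J=2$ by trimming is a legitimate way to make that deduction explicit. Two remarks. First, the one step whose stated justification does not hold up is ``$a<b$ since $\p_1\neq\p_2$'': distinctness of the wells only gives $a\neq b$, and you must still exclude $a>b$, i.e.\ the possibility that $\gamma_*$ reaches $\p_2$ before its last visit to $\p_1$. This exclusion is not topological but variational: if $b<a$, then Proposition~\thmref{prop:properties-length-min}(1) together with the additivity of $L$ on concatenations gives $L(\gamma_*)\geq L(\gamma_*|_{[0,b]})+L(\gamma_*|_{[b,a]})+L(\gamma_*|_{[a,1]})=3\,d(\p_1,\p_2)>d(\p_1,\p_2)$, contradicting $L(\gamma_*)=d(\p_1,\p_2)$ (here $d(\p_1,\p_2)>0$ because $d$ is a genuine metric, as any curve joining the wells must cross an annulus on which $F$ is bounded below). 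With that one line added, your trimming argument is complete. Second, the trimming can be avoided altogether: Theorem~\thmref{thm:existence-connection} already enumerates all times $t_1<\dots<t_J$ at which $\gamma_*$ meets $\Z$; since the finite sequence $\gamma_*(t_1),\dots,\gamma_*(t_J)$ takes values in $\{\p_1,\p_2\}$, begins at $\p_1$ and ends at $\p_2$, there is some index $i$ with $\gamma_*(t_i)=\p_1$ and $\gamma_*(t_{i+1})=\p_2$ (take $t_{i+1}$ to be the first time the value $\p_2$ occurs), and applying the theorem to that single $i$ already produces the desired $H$-minimizing heteroclinic connection between $\p_1$ and $\p_2$.
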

\medskip

In comparing Corollary~\thmref{cor:two-well-case} to the earlier results found in \cite{sternberg1991vector} and \cite{alikakos2008connection}, we point out that our assumptions \assref{asswells}-\assref{assgrowthbdd} are quite weak. For example, in \cite{sternberg1991vector} there is an assumption that the Hessian matrix of $W$ is positive definite at the two wells, while in \cite{alikakos2008connection}, the authors assume a radial monotonicity condition holds at the two wells of the form
\[
\exists \,r_0>0\;\mbox{such that}\;r\mapsto W(\p_{\pm}+r\xi)\;\mbox{is increasing for all}\;r\in (0,r_0)\;\mbox{and all}\;\xi\in \mathbb{S}^{N-1},
\]
cf. assumption (h) in \cite{alikakos2008connection}.
However, Corollary~\thmref{cor:two-well-case} above holds even for potentials that oscillate near the wells, such as \[
W(p)=\prod^2_{j=1}(2+\sin(1/|p-\p_j|))|p-\p_j|^2,\quad p\in \R^N,\quad \p_1,\p_2\in\R^N,
\]
that do not satisfy assumption ({\em h}). On the other hand, in the recent work \cite{byeon2016double} existence is obtained for a $C^1$ potential satisfying our assumptions \assref{asswells}-\assref{assregularity} without \assref{assgrowthbdd}.

Before beginning the proof of this proposition we remark on some properties of the so-called parametrization \emph{in equipartition of energy} which plays a crucial role in our approach, cf.(\cite{sternberg1991vector}, Lemma 1):

\begin{lem}\zlabel{lem:equip-energy} For any curve $\gamma\in C^0([0,1];\R^N)\cap C^1((0,1);\R^N)$ with non-vanishing derivative satisfying $\gamma(0)=\p_j,\;\gamma(1)=\p_k\,$ for $\,\p_j,\;\p_k\in \Z$ and $W(\gamma(t))> 0$ for all $t\in(0,1)$, define the equipartition parameter $x:(0,1)\to \R$ via
\begin{equation} \label{defEquipParam}
x(t):=
\int^t_{\frac{1}{2}}
\frac{\left|\gamma'(u)\right|}{\sqrt{2W(\gamma(u))}} du.
\end{equation}
Then $x:(0,1)\to\R$ is smooth, increasing and satisfies $x\big((0,1)\big)=\R$. Furthermore, the curve $U:\R\to\R^N$ given by
 $U(x):= \gamma(t(x))$ satisfies a pointwise equipartition of energy in the sense that
\[
\sqrt{2W(U)}\abs{\dfrac{dU}{dx}}=\frac{1}{2}\abs{\dfrac{dU}{dx}}^2+W(U), \quad\mbox{ for all }x\in\R.
\]
\end{lem}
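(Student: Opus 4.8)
The plan is to verify the three stated properties of $x(t)$ in turn and then compute the equipartition identity directly by the chain rule. First I would observe that since $\gamma\in C^1((0,1);\R^N)$ has non-vanishing derivative and $W(\gamma(t))>0$ on $(0,1)$, and since $W\in C^{1,\alpha}_{loc}$ away from $\Z$ (so in particular $\sqrt{W}$ is smooth where $W>0$), the integrand $u\mapsto |\gamma'(u)|/\sqrt{2W(\gamma(u))}$ is continuous and strictly positive on $(0,1)$. Hence $x$ is $C^1$ on $(0,1)$ with $x'(t)=|\gamma'(t)|/\sqrt{2W(\gamma(t))}>0$, so $x$ is strictly increasing; smoothness to the extent $\gamma$ and $W$ allow follows from the same formula for $x'$.

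The one genuinely substantive point is the surjectivity claim $x((0,1))=\R$, i.e. that the improper integrals $\int_{1/2}^{1}$ and $\int_{0}^{1/2}$ both diverge. This is exactly where assumption \assref{assgrowthbdd} enters (and where the hypothesis that $\gamma$ reaches the wells $\p_j,\p_k$ only at the endpoints is used). Near $t=1$, say, we have $\gamma(t)\to\p_k$, so for $t$ close to $1$ the bound $W(\gamma(t))\leq C|\gamma(t)-\p_k|^2$ holds, giving
\[
x(t)-x(\tfrac12)=\int_{1/2}^{t}\frac{|\gamma'(u)|}{\sqrt{2W(\gamma(u))}}\,du
\;\geq\;\frac{1}{\sqrt{2C}}\int_{1/2}^{t}\frac{|\gamma'(u)|}{|\gamma(u)-\p_k|}\,du
\;\geq\;\frac{1}{\sqrt{2C}}\int_{1/2}^{t}\frac{\big|\tfrac{d}{du}|\gamma(u)-\p_k|\big|}{|\gamma(u)-\p_k|}\,du,
\]
using $|\gamma'|\geq |\frac{d}{du}|\gamma-\p_k||$. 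The last integral is at least $\big|\log|\gamma(t)-\p_k|-\log|\gamma(\tfrac12)-\p_k|\big|$, which tends to $+\infty$ as $t\to 1^-$ since $|\gamma(t)-\p_k|\to 0$. Thus $\lim_{t\to1^-}x(t)=+\infty$; the symmetric argument at $t=0$ gives $\lim_{t\to0^+}x(t)=-\infty$, and continuity plus monotonicity then yield $x((0,1))=\R$. This makes $t(x)$ well-defined, smooth, and increasing as the inverse function.

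Finally, for the equipartition identity, set $U(x)=\gamma(t(x))$. Then $\frac{dU}{dx}=\gamma'(t(x))\,t'(x)=\gamma'(t(x))/x'(t(x))=\gamma'(t(x))\cdot\frac{\sqrt{2W(\gamma(t(x)))}}{|\gamma'(t(x))|}$, so that $\big|\frac{dU}{dx}\big|=\sqrt{2W(U)}$, hence $\big|\frac{dU}{dx}\big|^2=2W(U)$. Substituting, $\frac12\big|\frac{dU}{dx}\big|^2+W(U)=2W(U)=\sqrt{2W(U)}\cdot\sqrt{2W(U)}=\sqrt{2W(U)}\,\big|\frac{dU}{dx}\big|$, which is the claimed pointwise equipartition. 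The main obstacle is the divergence argument for surjectivity; the rest is routine chain-rule bookkeeping.
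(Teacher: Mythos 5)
Your proposal is correct and follows essentially the same route as the paper: both verify monotonicity from positivity of the integrand, obtain surjectivity of $x$ from assumption \assref{assgrowthbdd} by reducing to the divergence of a logarithmic integral near the wells (the paper first reparametrizes by Euclidean arc length to get $|\gamma(s)-\p_j|\le s$, whereas you work directly with the logarithmic derivative of $|\gamma(u)-\p_k|$, which amounts to the same estimate), and conclude the equipartition identity by the identical chain-rule computation $\abs{dU/dx}^2=2W(U)$. The only cosmetic slip is that your displayed lower bound should begin the integration at some $t_0$ close enough to $1$ that the quadratic bound of \assref{assgrowthbdd} actually applies on $[t_0,t]$, rather than at $\tfrac12$.
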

\begin{proof}[Proof of Lemma~\thmref{lem:equip-energy}]
We will write $\gamma'=\dfrac{d\gamma}{dt}$. That $x(t)$ is smooth and increasing is obvious. To establish that the range is all of $\R$,
we first reparametrize $\{\gamma(t): 0< t<1\}$ by Euclidean arc-length, denoted by $s$, so that the equipartition parameter becomes  
\[
x(s)=\int^s_{s(1/2)}\frac{1}{\sqrt{2W(\gamma(\tau))}}d\tau, \quad\mbox{with}\quad s(1/2)=\int^{1/2}_0|\gamma'(t)|dt. 
\]
We will argue that $\lim_{s\to 0^+}x(s)=-\infty$. The fact that  $\lim_{s\to s(1)^+}x(s)=+\infty$ will follow similarly. 
To this end, we fix $\eta>0$ so that $\abs{\gamma(s)-\p_j}<\delta$ for all $s\in (0,\eta)$ where $\delta$ is the value from assumption \assref{assgrowthbdd}.
In light of the arc-length parametrization, we know that $\abs{\gamma(s)-\p_j}\leq s$.
Consequently, for $s\in (0,\eta)$ one has
\[
W(\gamma(s))\leq C\abs{\gamma(s)-\p_j}^2\leq C s^2.
\]
Then we find that
\[
\lim_{s\to 0^+}|x(s)|\geq \lim_{s\to 0^+} \int_s^\eta \frac{1}{\sqrt{2W(\gamma(\tau))}}\, d\tau\geq
\lim_{s\to 0^+} \frac{1}{\sqrt{2C}}\int_s^\eta \frac{1}{\tau}\,d\tau=\infty.
\]
 
 The third point is an easy consequence of the chain rule:
\[
\frac{1}{2}\abs{\dfrac{dU}{dx}}^2=
\frac{1}{2}\abs{\gamma'\frac{dt}{dx}}^2
=\frac{1}{2}\abs{\gamma'}^2\dfrac{2W(U)}{\left|\gamma'\right|^2}=W(U).
\]
\end{proof}

\begin{proof}[Proof of Theorem~\thmref{thm:existence-connection}]
We consider first the case where $W(\gamma_*(t))> 0$ for all $0<t<1$, so that $\p_j$ and $\p_k$ are the only zeros of $W$ traversed by the curve. By Proposition~\thmref{prop:regularity}(i) we may take a $C^{2,\alpha}$-parametrization of $\gamma_*$, defined on $(0,1)$, with non-vanishing derivative. Then applying Lemma~\thmref{lem:equip-energy}, we reparametrize this curve using the equipartition parameter \eqref{defEquipParam}, yielding $U_*(x):= \gamma_*(t(x))$ with $U_*:(-\infty,+\infty)\to \R^N$.
Since $E$ is invariant under reparametrization, the fact that $\gamma_*$ is a minimizing geodesic between $\p_j$ and $\p_k$, implies that $U_*$ solves the geometric problem (GP) as well, so that $E(U_*)=E(\gamma_*)=d(\p_j,\p_k)$. On the other hand, by virtue of the equipartition of energy
in Lemma~\thmref{lem:equip-energy}, we have $E(U_*)=H(U_*)$. Consequently, we deduce that for any
$U\in H^1_{loc}(\R,\R^N)$ satisfying $U(-\infty)=\p_j$, $U(+\infty)=\p_k$
\[ 
H(U_*)=E(U_*)\leq E(U)\leq H(U).
\]
Hence, $U_*$ is a global minimizer of problem (HP). In view of the $C^{2,\alpha}$-regularity of $U_*$ guaranteed by Proposition~\thmref{prop:regularity}, this curve is a classical solution to the Euler-Lagrange equation associated with $H$. We conclude that $U_*$ is a $H$-minimizing heteroclinic connection between $\p_j$ and $\p_k$. 

To handle the case where $\gamma_*((0,1))\cap \Z\neq\emptyset$, as previously remarked in Proposition~\thmref{prop:regularity} the geodesic $\gamma^*$ can only traverse $\Z$ finitely many times, so we write 
 $0=t_1<t_2<t_3<\ldots<t_J=1$ with $J\geq 2$ for the times when $\gamma_*(t)\in\Z$.
Fixing any $i\in \{1,\ldots,J-1\}$ and restricting to the arc $\gamma_*((t_i,t_{i+1}))$ which does not intersect $\Z$, we apply the previous case to conclude that $U_*(x):=\gamma_*(t(x))$ is an $H$-minimizing heteroclinic connection between $\gamma_*(t_i),\gamma_*(t_{i+1})\in \Z$, for $t\in(t_i,t_{i+1})$.
\end{proof}

\subsection{Remarks on the obstruction to existence of heteroclinic orbits}

As observed in \cite{alama1997stationary,alikakos2006explicit,alikakos2008connection} the presence of multiple wells may obstruct the existence of heteroclinic connections between two given wells $\p_j$ and $\p_k$ in $\Z$. In particular, utilizing complex variables techniques in the planar case $N=2$ under the assumption that $W(z)=\abs{f(z)}^2$ with $f$ holomorphic,  the authors of \cite{alikakos2008connection} obtain various conditions, both necessary and sufficient, for existence of heteroclinic connections. For example, in case $W(z)$ takes the form 
\[
W(z)=\abs{\prod\limits_{j=1}^3 a(z-z_j)}^2,
\]
for some $a\in\C$, with $z_j\in\C$ taking the role of our $\p_j$, their Proposition 4.5 states that a heteroclinic connection exists between, say, $z_1$ and $z_2$ if and only if the strict triangle inequality holds for the metric $d$:
\[
d(z_1,z_2)<d(z_1,z_3)+d(z_3,z_2).
\]
 As a concrete example of the non-existence phenomenon, they consider the three-well potential $W_\e:\C\to \R$  given by $W_{\e}(z)=\abs{(1-z^2)(z-i\e)}^2$, for $\e\in\R$ and $z\in \C$, whose zero set is $\Z=\{-1,+1,i\e\}$. 
Their analysis proves that there exists a connection between $-1$ and $+1$ if and only if $|\e|>\sqrt{2\sqrt{3}-3}=:\e_*$. In particular, when
$\abs{\e}\leq \e_*$ they establish the identity $d(-1,+1)= d(-1,i\e)+d(i\e,+1)$, leading to the conclusion that the minimizing geodesic between
$-1$ and $1$ passes through the zero $i\e$, see the Figure~\ref{fig:3}.

\begin{figure}[!ht]
\centering
  \includegraphics[width=0.7\textwidth]{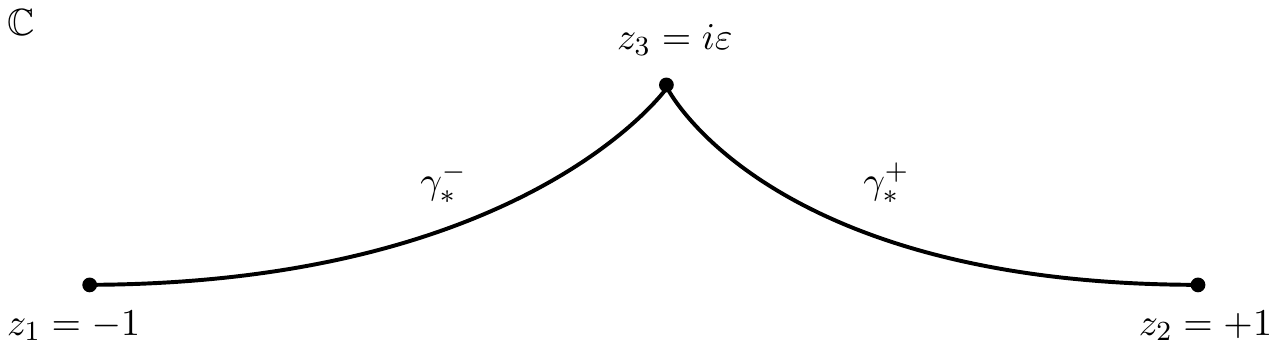}
\caption{Sketch of geodesics $\gamma^-_*\in Lip_{\Z}(-1,i\e),\, \gamma^+_*\in Lip_{\Z}(i\e,+1)$, for $0<\e\leq \e_*$.}
\label{fig:3}
\end{figure}
This illustrates the obstruction to be avoided in applying our Corollary~\thmref{cor:offZ}. Of course, in the generality in which our result holds, we only have the identity
\[
d(\p_j,\p_k)=d(\p_j,\p_\ell)+d(\p_\ell,\p_k)\quad\mbox{for some }\;\p_\ell\in \Z,
\]
as a necessary condition for non-existence of a heteroclinic connection between $\p_j$ and $\p_k$ in $\Z$.  Indeed, the striking rigidity of the Alikakos-Fusco result quoted above is surely related to the analyticity of $f$ in the assumption $W(z)=\abs{f(z)}^2$, in addition to its being in the planar setting. For example, we see no reason why {\it local} minimizers or even saddle points of $E$ or of $H$ should not exist for general $W:\R^N\to [0,\infty)$ having three or more zeros, leading to connections even when the (globally) minimizing geodesic fails to provide a heteroclinic connection because it passes through a third well.

Though we do not present an explicit example of such an occurrence, we will conclude with an example of a non-minimizing heteroclinic connection that co-exists with multiple minimizing geodesic connections.  Let us consider for $N=3$ the potential $W:\R^3\to\R$ given by
\[
W(x,y,z)
=x^2(1-x^2)^2+\left(y^2-\frac{1}{2}(1-x^2)^2\right)^2+\left(z^2-\frac{1}{2}(1-x^2)^2\right)^2.
\]
It can be readily checked that conditions (A1)-(A4) are satisfied by $W$. In particular we have that $m=6$, and the collection of zeros of $W$ consists of 
\begin{equation*}
\begin{array}{l}
\p_1=(-1,0,0),\,\, \p_2=(1,0,0),\\[0.2cm]
\p_3=\left(0,\frac{1}{\sqrt{2}},\frac{1}{\sqrt{2}}\right),\,\,
\p_4=\left(0,\frac{1}{\sqrt{2}},-\frac{1}{\sqrt{2}}\right),\,\,
\p_5=\left(0,-\frac{1}{\sqrt{2}},-\frac{1}{\sqrt{2}}\right),\,\,
\p_6=\left(0,-\frac{1}{\sqrt{2}},\frac{1}{\sqrt{2}}\right).
\end{array}
\end{equation*}

On the one hand, the existence of a heteroclinic connection between $\p_1$ and $\p_2$ can be argued by pursuing the ansatz $U_0(t)=(u(t),0,0)$ and then noting that the system of O.D.E.s $U''_0=\nabla W(U_0)$ reduces to a scalar differential equation 
\[
u''=-2u(1-u^2)(1+u^2+u^4)\,\,\mbox{ with }\,\,u(\pm\infty)=\pm 1.
\] 
Existence of such a solution $u$ follows from an elementary phase plane analysis and the use of the pointwise equi-partition relation $|U'_0|^2=2W(U_0(t))$. Hence, one gets the existence of a heteroclinic orbit $U_0:\R\to\R^3$ joining $\p_1$ to $\p_2$ that follows the $x$-axis. 

On the other hand, this line segment is not a minimizing geodesic between $\p_1$ and $\p_2$ since we can easily exhibit competitors with less $E$-values. For instance consider for $\mu\in[-1,1]$
\[
\gamma_{\pm,\pm}(\mu):=\left(\mu,\frac{\pm\e}{\sqrt{2}}(1-\mu^2),\frac{\pm\e}{\sqrt{2}}(1-\mu^2)\right),\quad \e=0.96.
\]
which yields four curves joining $\p_1$ to $\p_2$. If we let $\gamma_0(\mu)=(\mu,0,0)$ be a parametrization of the line segment, we explicitly compute
\begin{equation*}
\begin{array}{rl}
E(\gamma_{\pm,\pm})=& {\displaystyle \int^{1}_{-1}
\sqrt{W(\gamma_{\pm,\pm})}|\gamma'_{\pm,\pm}|\,d\mu}\\
=&{\displaystyle \int^{1}_{-1}
\sqrt{[\mu^2(1-\mu^2)^2+\frac{1}{2}(1-\mu^2)^4(1-\e^2)^2] [1+4\mu^2-4(1-\e^2)\mu^2] }\,d\mu}\\[0.4cm]
\approx &0.74,\displaybreak[3]\\[0.25cm]
\end{array}
\end{equation*}
while
\begin{equation*}
\begin{array}{rl}
E(\gamma_0)= & {\displaystyle \int^{1}_{-1}\sqrt{W(\gamma_0)}|\gamma_0'|\,d\mu}\\
=& {\displaystyle \int^{1}_{-1}\sqrt{\mu^2(1-\mu^2)^2+\frac{1}{2}(1-\mu^2)^4}\,d\mu}\\[0.4cm]
\approx & 0.98
\end{array}
\end{equation*}

It then follows that $U_0$ is not a global minimizer of $H$ either. To see this, note that since, for example, $\gamma_{+,+}:[-1,1]\to\R^3$ is a competing curve which does not run into any of the zeros $\Z=\{\p_1,\ldots,\p_6\}$ of $W$ other than at the endpoints, we can reparametrize it by the equipartition parameter, cf. Lemma~\zref{lem:equip-energy}, to get a new curve $U_1:(-\infty,\infty)\to\R^3$. Then the value of the two functionals $E$ and $H$ agree at this curve, so we conclude from the comparison of degenerate lengths above that 
\[
H(U_1)=E(U_1)=E(\gamma_{\pm,\pm})< E(\gamma_0)=E(U_0)\leq H(U_0). 
\]

In light of these facts and invariance of the potential $W$ under the reflections $y\mapsto -y,\;z\mapsto -z$, there will exist multiple minimizing geodesics joining $\p_1$ to $\p_2$ in addition to the non-minimizing heteroclinic connection along the $x$-axis.

\bigskip

\bibliographystyle{amsplain}

\bigskip

\end{document}